\numberwithin{equation}{section}
\newtheorem{theorem}{Theorem}
\newtheorem{lemma}{Lemma}
\newtheorem{corollary}{Corollary}
\theoremstyle{definition}
\newtheorem{rem}{Remark}
\newcommand{\dd}{\mathrm d}
\newcommand{\DD}{\mathrm D}
\newcommand{\cD}{\mathcal D}
\newcommand{\cC}{\mathcal C}
\newcommand{\cL}{\mathcal L}
\newcommand{\cK}{\mathcal K}
\newcommand{\cF}{\mathcal F}
\newcommand{\IR}{\mathbb R}
\newcommand{\IN}{\mathbb N}
\newcommand{\tD}{\text D}
\newcommand{\tN}{\text N}
\newcommand{\vp}{\varphi}
\newcommand{\uu}{\underline u}
\newcommand{\uf}{\underline f}
\DeclareMathOperator{\Span}{span}
\DeclareMathOperator{\meas}{meas}
\DeclareMathOperator{\diag}{diag}
\title[Well-Posedness of a hyperbolic type integro-differential equation]
{Well-posedness of an integro-differential equation with positive type kernels 
modeling fractional order viscoelasticity}
\author[F.~Saedpanah]{Fardin Saedpanah}
\address{Department of Mathematics, 
University of Kurdistan, P. O. Box 416, 
Sanandaj, Iran}
\email{f.saedpanah@uok.ac.ir\\
           f\_saedpanah@yahoo.com}
\thanks{Research supported by University of Kurdistan under grant (4/1238).}
\keywords{integro-differential equation, 
fractional order viscoelasticity, Galerkin approximation, 
weakly singular kernel, regularity, a priori estimate.}
\subjclass{45K05}  
\begin{document}

\begin{abstract}
A hyperbolic type integro-differential equation with two weakly 
singular kernels is considered together with 
mixed homogeneous Dirichlet and non-homogeneous 
Neumann boundary conditions. 
Existence and uniqueness of the solution is proved by means of 
Galerkin's method. Regularity estimates are 
proved and the limitations of the regularity are discussed. 
The approach presented here is also used to prove 
regularity of any order for models with smooth kernels, 
that arise in the theory of linear viscoelasticity, 
under the appropriate assumptions on data. 
\end{abstract}

\date{October 24, 2013}

\maketitle
 
\section{Introduction}
We study the model problem \eqref{strongform}, which is a 
hyperbolic type integro-differential equation with two weakly 
singular kernels of Mittag-Leffler type. 
This problem arises as a model for fractional 
order viscoelasticity. The fractional order viscoelastic 
model, that is, the linear viscoelastic model with 
fractional order operators in the constitutive equations, 
is capable of describing the behavior of many viscoelastic materials 
by using only a few parameters. 

A perfectly elastic material does not exist since in reality: 
inelasticity is always present. 
This inelasticity leads to energy dissipation or damping. 
Therefore, for a wide class of materials it is not sufficient 
to use an elastic constitutive model to capture the 
mechanical behaviour.
In order to replace extensive experimental tests by numerical
simulations there is a need for an accurate material model.
Therefore viscoelastic constitutive models have frequently 
been used to simulate the
 time dependent behaviour of polymeric materials. 
The classical linear viscoelastic models that use integer order 
time derivatives in the constitutive laws, 
require an excessive number of parameters to accurately 
predict observed material behaviour, 
see e.g., \cite{AdolfssonEnelundOlsson} and 
\cite{RiviereShawWhiteman} for examples and more references. 
These models describe  e.g., polymeric materials such as 
natural and synthetic rubber, and require a large number of exponential 
(smooth) kernels to describe the behavior of the materials. 

Bagley and Torvik \cite{BagleyTorvik} used fractional derivatives 
to construct stress-strain relationships for viscoelastic materials. 
The advantage of this approach is that very few empirical 
parameters are required. 
When this fractional derivative model of viscoelasticity is incorporated directly into
 the structural equations a time differential equation of non-integer order higher
 than two is obtained. One consequence of this is that initial conditions of
fractional order higher than one are required. The problems with initial conditions
 of fractional order have been discussed by Enelund and Olsson 
 \cite{EnelundOlsson}, see also references therein. 
To avoid the difficulties with fractional order initial conditions some
alternative formulations of the fractional derivative viscoelastic model are used in
structural modeling. The formulation that we use, is based on a convolution integral formulation with weakly singular fractional order kernels 
of Mittag-Leffler type, see \cite{Stig4}, 
\cite{EnelundJosefson}, and \cite{EnelundOlsson}. 
For other formulations, that involves fractional integral 
operators rather than fractional derivative operators, 
or uses internal variables, see 
\cite{EnelundFenanderOlsson}, \cite{EnelundLesieutre} and 
\cite{EnelundMahlerRunessonJosefson}. 

Another formulations of fractional derivative viscoelastic model 
can be in terms of so-called diffusive representation. 
It is a different approach, from the convolution integral formulation 
that is presented here, and it was introduced for numerical 
simulation of complex dynamics in \cite{MontsenyAudounetMbodge}. 
Based on diffusive representation 
of the fractional integral/differential operators, for links between 
these concepts see \cite{Matignon1998}, the output solution 
is represented in terms of a so-called diffusive symbol  
and a state function, that is a solution of an ordinary 
differential equation in time. The state function is called the 
diffusive representation of the input. 
Diffusive realizations of the fractional integral/differential operators, 
using the Laplace transform of their kernels, avoids the hereditary 
behavior of such operators. 
This means that, for time domain discretization methods, we need 
only the previous time step to update the integral at each time step. 
For more references and applications of this method see 
\cite{HaddarMatignon}, \cite{DeuMatignon} and references therein. 

The fractional order kernels are the only mean to get a correct 
representation for the storage and loss moduli, and to have well-posed 
identification problem for many viscoelastic materials. 
Important properties of such kernels are integrability and 
completely monotonicity, that (as a consequence of dissipation) 
implies to be positive type. In fact, these kernels interpolate between smooth (exponential) kernels and weakly singular kernels, that are 
singular at origin but integrable on finite time intervals, i.e., belong to 
$L_{1,loc}(\IR^+)$.  
A chief example is 
$\beta(t)=\frac{1}{\Gamma(\alpha)}\frac{1}{t^{1-\alpha}},\ \alpha \in (0,1)$. 
For more details and examples see 
\cite{CiambellaPaoloneVidoli} and references therein.  
This is the reason for introducing kernels
of Mittag-Leffler type or fractional operators.
In \cite{AdolfssonEnelundOlsson} and \cite{EnelundOlsson} 
it is shown that the classical
viscoelastic model based on exponential kernels can describe
the same viscoelastic behaviour as the fractional model if the
number of kernels tend to infinity. 

In fractional order viscoelastic models 
the whole strain history must be saved and included in each time 
step that is due to the non-locality of the fractional order 
integral/differential operators. The most commonly used algorithms 
for this integration are based on Lubich convolution quadrature 
\cite{Lubich} for fractional order operators, 
see also \cite{LopezLubichSchadle} for an improved version. 
For examples of the application of this approch to overcome 
the problem with the growing amount of data, that has to be stored 
and used in time stepping methods, see  
\cite{Stig4}, \cite{LubichSloanThomee} and \cite{McLeanThomeeWahlbin}. 
For analysis and numerical solution 
of integro-differential equations and related problems, 
from the extensive literature, see e.g.,   
\cite{StigFardin}, \cite{McLeanThomee2010}, \cite{RiviereShawWhiteman}, 
and their references. 

Existence, uniqueness, and regularity of the solution of 
models with exponential kernels can be adapted from, 
e.g., \cite{Dafermos}, where an abstract Volterra equation, 
as an abstract model for equations of linear viscoelasticity, 
has been considered. 
See also \cite{Fichera1979} for another paper dealing with 
well-posedness of problems in linear viscoelasticity 
with smooth kernels. 
Existence, uniqueness and regularity of a parabolic type integro-differential 
equation has been studied in \cite{McLeanThomee} by means of 
Fourier series. 
One may also see \cite{DeschFasanga}, where the 
theory of analytic semigroups is used in terms of 
interpolation spaces to solve a boundary value problem in 
linear viscoelasticity. 
Well-posedness of an integro-differential equation, 
a model from dynamic linear viscoelasticity with exponential kernels 
and first order spatial operator in the convolution integral, 
has been studied in \cite{McLaughlinThomasYoon}, 
by means of Galerkin approximation method. 
However, the mixed homogeneous Dirichlet and non-homogeneous 
Neumann boundary condition, that is important for practitioners, 
has not been considered. 

In a previous work \cite{StigFardin}, well-posedness and 
regularity of the problem \eqref{strongform2}, 
which is a simplified form (synchronous viscoelasticity) 
of the model problem \eqref{strongform}, 
was studied in the framework of the semigroup of linear operators. 
The drawback of the framework is that this does not 
admit non-homogeneous Neumann boundary condition, while in 
practice mixed homogeneous Dirichlet and non-homogeneous 
Neumann boundary conditions are of special interest. 

Here we consider the model problem \eqref{strongform}, 
which is a hyperbolic type integro-differential equation with two 
weakly singular kernels of Mittag-Leffler type, 
and it is the convolution integral formulation of the 
constitutive equation system  \eqref{constitutiveequation}.  
The mixed homogeneous Dirichlet and non-homogeneous 
Neumann boundary condition has been considered, 
and we investigate existence, uniqueness and regularity 
of the solution of the model problem by 
means of the Galerkin approximation method. 
We also extend the presented approach so that regularity 
of any order of the solution for the models with smooth kernels 
can be proved.

In the sequel, in $\S2$ we describe the construction of the 
model problem \eqref{strongform}.   
In $\S3$ we define a weak (generalized) solution and, 
using Galerkin's method, we prove existence and 
uniqueness of the weak solution of the problem. 
Finally, in $\S4$ we study regularity of the solution and 
limitations for higher regularity. 
We also show that higher regularity of any order of the 
solution of models with smooth kernels can be achieved. 

\section{Fractional order viscoelasticity}
Let $\sigma_{ij}$, $\epsilon_{ij}$ and $u_i$ denote, respectively, 
the usual stress tensor, strain tensor and displacement vector. 
We recall that the linear strain tensor is defined by, 
\begin{align*}
  \epsilon_{ij}
  = \frac12 \Big( 
  \frac{\partial u_i}{\partial x_j} 
  + \frac{\partial u_j}{\partial x_i} \Big) .
\end{align*}
We recall that the simplest fractional derivative model of viscoelasticity, 
so-called fractional Zener model, is 
\begin{equation}   \label{Zenermodel}
    \sigma(t) + \tau^{\alpha} D_t^{\alpha}\sigma(t)
      = E_\infty \epsilon(t) 
       + E\tau^{\alpha} D_t^{\alpha}\epsilon(t),
\end{equation}
where $\tau$ is the relaxation time, $\alpha$ is the fractional order 
of differentiation, and $E$, $E_\infty$ are the instantaneous (unrelaxed) 
and long-time (relaxed) modulus, respectively. 
This model has been shown to describe the actual weak frequency 
dependence, of the complex modulus, for rather a wide class of 
engineering materials, 
see \cite{BagleyTorvik},  \cite{EnelundOlsson} and \cite{Pritz1996} 
for more details. 

With the decompositions
\begin{align*}
  s_{ij}=\sigma_{ij}-\tfrac13 \sigma_{kk}\delta_{ij}, \quad
  e_{ij}=\epsilon_{ij}-\tfrac13 \epsilon_{kk}\delta_{ij},
\end{align*}
the constitutive equations, fractional Zener models, 
are formulated as, see \cite{EnelundJosefson},
\begin{equation}   \label{constitutiveequation}
  \begin{split}
    s_{ij}(t) &+ \tau_1^{\alpha_1} D_t^{\alpha_1}s_{ij}(t)
      = 2G_\infty e_{ij}(t) + 2G\tau_1^{\alpha_1} D_t^{\alpha_1}e_{ij}(t),\\
    \sigma_{kk}(t) &+ \tau_2^{\alpha_2} D_t^{\alpha_2}\sigma_{kk}(t)
      = 3K_\infty \epsilon_{kk}(t) 
       + 3K\tau_2^{\alpha_2} D_t^{\alpha_2}\epsilon_{kk}(t),
  \end{split}
\end{equation}
with initial conditions
\begin{align*}
  s_{ij}(0+)=2Ge_{ij}(0+), \quad
  \sigma_{kk}(0+)=3K\epsilon_{kk}(0+),
\end{align*}
meaning that the initial response follows Hooke's elastic law. 
Here $G,\,K$ are the instantaneous (unrelaxed) shear and bulk modulus, 
and $G_\infty,\,K_\infty$ are the long-time (relaxed) shear and bulk modulus, 
respectively. 
Note that we have two relaxation times, $\tau_1,\tau_2>0$, 
and fractional orders of differentiation, $\alpha_1,\,\alpha_2\in(0,1)$,
 where the fractional order derivative is defined by, \cite{Podlubny}, 
\begin{align*}
  D_t^\alpha f(t)
  =  D_t   D_t^{-(1-\alpha)} f(t)
  = D_t \frac{1}{\Gamma(1-\alpha)}  \int_0^t (t-s)^{-\alpha}f(s)\,ds .
\end{align*}
The constitutive equations \eqref{constitutiveequation} 
can be solved for $\sigma$ by means of Laplace transformation, 
\cite{EnelundOlsson}:
\begin{align*}
  \begin{aligned}
    s_{ij}(t)&=2G\Big(e_{ij}(t) 
       - \frac {G-G_\infty}G \int_0^t \theta_1(t-s)e_{ij}(s) \,ds \Big), \\
    \sigma_{kk}(t) &=3K\Big( \epsilon_{kk}(t)  
       - \frac {K-K_\infty}K \int_0^t \theta_2(t-s)\epsilon_{kk}(s)
         \,ds\Big), 
  \end{aligned}
\end{align*}
where, for $i=1,2$,
\begin{align*}
  \theta_i(t) = - \frac{d}{dt} E_{\alpha_i}
  \Big(-\Big(\frac{t}{\tau_i}\Big)^{\alpha_i}\Big),\quad
  E_{\alpha_i}(z)=\sum_{n=0}^\infty \frac{z^n}{\Gamma(1+n\alpha_i)},
\end{align*}
and $E_{\alpha_i}$ is the Mittag-Leffler function of order 
$\alpha_i$, \cite{Bateman}. 
Then we define parameters 
$\gamma_i$, and the Lam\'e constants $\mu$, $\lambda$, 
\begin{align*}
   \gamma_1=\frac {G-G_\infty}G, 
    \quad \gamma_2 =  \frac {K-K_\infty}K, 
    \quad \mu = G ,  \quad  \lambda = K-\tfrac23G.
\end{align*}
We recall that, due to dissipation, we need the assumtions, 
for $\alpha_i \in (0,1),\  i=1,2$, 
\begin{equation*}
  K>K_\infty>0, \quad G>G_\infty>0, \quad \tau_i>0,
\end{equation*}  
and therefore we have $0<\gamma_i<1,\ i=1,2$, 
see \cite{BagleyTorvik1986}, 
and also \cite{AdolfssonEnelundOlsson}, \cite{EnelundJosefson} 
for examples. 

We also define $\beta_i=\gamma_i\theta_i$,
and the constitutive equations become
\begin{align*} 
  \begin{split}
    \sigma_{ij}(t)
      &=\Big(2\mu\epsilon_{ij}(t)+\lambda\epsilon_{kk}(t)\delta_{ij} \Big)
        -2\mu\int_0^t\!\beta_1(t-s) 
	\Big(\epsilon_{ij}(s)-\tfrac13 \epsilon_{kk}(s)\delta_{ij}
	 \Big)\,ds\\
      &\quad -\frac{3\lambda+2\mu}{3}
         \int_0^t\!\beta_2(t-s)\epsilon_{kk}(s)\delta_{ij}\,ds.
  \end{split}     
\end{align*}
The kernels are weakly singular, i.e., singular at the origin but integrable, 
for $i=1,\,2$:
\begin{align}  \label{beta}
  \begin{split}
    \beta_i(t)&=-\gamma_i  \frac{d}{dt}E_{\alpha_i}
      \Big(-\Big(\frac{t}{\tau_i}\Big)^{\alpha_i}\Big)
      =\gamma_i\frac{\alpha_i}{\tau_i} \Big(\frac{t}{\tau_i}\Big)^{-1+\alpha_i}
      E_{\alpha_i}'  \Big(-\Big(\frac{t}{\tau_i}\Big)^{\alpha_i}\Big)\\
    &\approx C t^{-1+\alpha_i}, \ \text{ $t\to 0$} ,
  \end{split}
\end{align}
and we note the properties
\begin{align}  \label{betaproperties}
  \begin{aligned}
    \beta_i(t)&\ge 0, \\ 
      \|\beta_i\|_{L_1(\IR^+)}&=\int_0^\infty\!
        \beta_i(t)\,dt
   =\gamma_i \Big( E_{\alpha_i} (0)- E_{\alpha_i}(\infty)\Big)=\gamma_i <1.
  \end{aligned}
\end{align}

The equations of motion are
\begin{align}  \label{motion}
  \begin{aligned}
  &\rho u_{i,tt}-\sigma_{ij,j} =f_i ,  \quad &&\text{in } \Omega, \\
   &u_i=0 , \quad &&\text{on } \Gamma_\text{D}, \\
   &\sigma_{ij}n_j=g_i , \quad &&\text{on } \Gamma_\text{N},
  \end{aligned}
\end{align}
where $u$ is the displacement vector, $\rho$ is the (constant) mass density,
$f$ and $g$ represent, respectively, the volume and surface loads. 
We let $\Omega \subset\mathbb{R}^d, \, d=2,3$, be a bounded 
polygonal domain with boundary 
$\Gamma_\text{D}\cup\Gamma_\text{N}=\partial\Omega,\,
\Gamma_\text{D}\cap\Gamma_\text{N}=\varnothing$ 
and $\meas(\Gamma_\text{D})\neq 0$. 
We set
\begin{align} \label{Ai}
  \begin{split}
   (Au)_i&=-\big(2\mu\epsilon_{ij}(u)
       +\lambda\epsilon_{kk}(u)\delta_{ij}\big)_{\!,j}\,,\\
   (A_1u)_i&=-2\mu\big(\epsilon_{ij}(u)
       -\tfrac13\epsilon_{kk}(u)\delta_{ij}\big)_{\!,j}\,,\\
   (A_2u)_i&=-\frac{3\lambda+2\mu}{3}
       \big(\epsilon_{kk}(u)\delta_{ij}\big)_{\!,j}\,,
  \end{split}
\end{align}
and clearly we have $A=A_1+A_2$. 
Now, we write the equations of motion \eqref{motion} in the strong form, (we denote time derivatives with '$\cdot$'),
\begin{align}   \label{strongform}
  \begin{aligned}
    &\rho \ddot u(x,t)+Au(x,t)\\
    &\qquad-\sum_{i=1}^2\int_0^t\!\beta_i(t-s)A_iu(x,s)\,ds=f(x,t)
        \quad&& \textrm{in} \;\,\Omega\times (0,T),\\
    &u(x,t)=0 \quad&& \textrm{on}\;\Gamma_\text{D}\times (0,T),\\
    &\sigma(u;x,t)\cdot n=g(x,t)
        \quad&&\textrm{on}\;\Gamma_\text{N}\times (0,T),\\
    &u(x,0)=u^0(x)\quad&&\textrm{in}\;\,\Omega,\\
    &\dot u(x,0)=v^0(x)\quad&&\textrm{in}\;\,\Omega. 
  \end{aligned}
\end{align}

\begin{rem}
If we make the simplifying
assumption (synchronous viscoelasticity), 
that is, when all elastic modulus at each 
material point have the same relaxation behavior:
\begin{align*}
 \quad \alpha=\alpha_1=\alpha_2, 
 \quad  \tau=\tau_1=\tau_2,\quad \theta=\theta_1=\theta_2,
\end{align*}
we may define $\gamma=\gamma_1=\gamma_2$, so that 
$\beta=\beta_1=\beta_2$. 
Then the strong form of the equations of motion is
\begin{align}   \label{strongform2}
  \begin{aligned}
    \rho \ddot u(x,t)
    &+Au(x,t)
    -\int_0^t\!\beta(t-s)Au(x,s)\,ds=f(x,t)
      \qquad \textrm{in} \;\,\Omega\times (0,T),
  \end{aligned}
\end{align}
together with the boundary and initial conditions in 
\eqref{strongform}. We note that \eqref{strongform2} 
is the strong form of the equation of motion of the simplest 
fractional model of viscoelasticity \eqref{Zenermodel}, 
in the convolution integral formulation. 
And we note that \eqref{strongform} is also valid for small 
deformations, that is true deviatoric and bulk parts. 

Well-posedness and regularity of the simplified problem 
\eqref{strongform2} was studied in \cite{StigFardin}, 
in the framework of the semigroup of linear operators.  
The drawback of the framework is that this does not 
admit non-homogeneous Neumann boundary condition. 
More details and examples of the simplified problem  
\eqref{strongform2} can be found in  
\cite{AdolfssonEnelund2003}, \cite{Stig3}, \cite{Stig4}, 
\cite{EnelundJosefson}, 
\cite{EnelundMahlerRunessonJosefson}, 
\cite{EnelundOlsson}, and \cite{SchmidtGaul}, e.g., 
bar, beam and plain starin in 2D can be found in 
\cite{EnelundJosefson}, 
\cite{EnelundMahlerRunessonJosefson}, and 
\cite{EnelundOlsson}. 
\end{rem}

\section{Existence and uniqueness}
In this section we prove existence and uniqueness of a weak 
solution of \eqref{strongform} using Galerkin's method, 
in a similar way for hyperbolic PDE's in 
\cite{DautrayLions}, \cite{Evans}. 
To this end, we first formulate the weak form of the model 
problem \eqref{strongform}. 
Then we introduce the Galerkin approximation of a 
weak solution of \eqref{strongform} in a classical way, and 
we obtain a priori estimates for approximate solutions. 
These will be used to construct a weak solution, and then uniqueness will be verified.

\subsection{Weak formulation}
We define the bilinear form (with the usual summation convention)
\begin{equation*}
  a(u,v)=\int_{\Omega}\!\big(
  2\mu\epsilon_{ij}(u)\epsilon_{ij}(v)
   +\lambda\epsilon_{ii}(u)\epsilon_{jj}(v)\big)\,dx,
   \quad \forall u,v\in V,
\end{equation*}
which is well-known to be coercive. In a similar way, 
corresponding to $A_i,\ i=1,\ 2$, the bilinear forms $a_i(u,v)$ are defined. 
We introduce the function spaces
$H=L_2(\Omega)^d,\,H_{\Gamma_\tN}=L_2(\Gamma_\tN)^d,\,$ 
and 
$V=\{v\in H^1(\Omega)^d:v\!\!\mid_{\Gamma_\tD}=0\}$. 
We denote the norms in $H$ and $H_{\Gamma_\tN}$ by
$\|\cdot\|$ and $\|\cdot\|_{\Gamma_\tN}$,
 respectively, and we equip $V$ with the inner product 
$a(\cdot,\cdot)$ and norm 
$\|v\|_V^2=a(v,v)$. 
We note that, for $v\in V$,
\begin{equation}   \label{ai}
  a_i(v,v)\le \|v\|_V^2.
\end{equation}

Now we define  a weak solution to be a function $u=u(x,t)$ 
that satisfies
\begin{align}   \label{weaksolution1}
  &u\in L_2((0,T);V),\quad \dot u\in L_2((0,T);H),
    \quad \ddot u\in L_2((0,T);V^*),\\ \label{weaksolution2}
  &\rho \langle \ddot u(t),v\rangle + a(u(t),v)
     -\sum_{i=1}^2 \int_0^t\! \beta_i(t-s) a_i(u(s), v) \,ds \\
  &\nonumber\qquad\qquad\qquad\qquad= (f(t),v) + (g(t),v)_{\Gamma_\text{N}},
    \quad \forall v \in V ,\,\,\text{a.e.}\,\,t\in (0,T),\\
      \label{weaksolution3}
  &u(0)=u^0,\quad \dot u(0)=v^0.
\end{align}
Here $(g(t),v)_{\Gamma_\text{N}}=\int_{\Gamma_\text{N}}\!g(t)\cdot v\,dS$,
 and $\langle\cdot,\cdot\rangle$ denotes the pairing of $V^*$ and $V$.
We note that \eqref{weaksolution1} implies, 
by a classical result for Sobolev spaces, that 
$u\in \cC([0,T];H),\,\dot u\in\cC([0,T];V^*)$ 
so that the initial conditions \eqref{weaksolution3} make sense for 
$u^0\in H,\,v^0\in V^*$.

\subsection{Galerkin approximations}
Let $\{(\lambda_j,\vp _j)\}_{j=1}^\infty$ be the eigenpairs 
of the weak eigenvalue problem
\begin{equation}   \label{weakeigenvalue}
  a(\vp,v)=\lambda(\vp,v),\quad \forall v\in V.
\end{equation}
It is known that $\{\vp _j\}_{j=1}^\infty$ can be chosen to be an ON-basis in 
$H$ and an orthogonal basis for $V$.

Now, for a fixed positive integer $m\in\IN$, we seek a function of the form
\begin{equation}   \label{u_m}
  u_m(t)=\sum_{j=1}^m d_j(t)\vp _j
\end{equation}
to satisfy
\begin{equation}   \label{weakGalerkin}
  \begin{split}
    \rho (\ddot u_m(t),\vp_k&) + a(u_m(t),\vp_k)
    -\sum_{i=1}^2 \int_0^t\! \beta_i(t-s) a_i(u_m(s), \vp_k)\,ds\\
    &= (f(t),\vp_k) + (g(t),\vp_k)_{\Gamma_\text{N}},
    \quad k=1,\,\dots,\,m ,\,t\in (0,T),
  \end{split}
\end{equation}
with initial conditions
\begin{equation}   \label{weakGalerkininitial}
  u_m(0)=\sum_{j=1}^m(u^0,\vp _j)\vp _j,
    \quad \dot u_m(0)=\sum_{j=1}^m(v^0,\vp _j)\vp _j.
\end{equation}

\begin{lemma}
For each $m\in \IN$, there exists a unique function $u_m$ of the form
\eqref{u_m} satisfying \eqref{weakGalerkin}-\eqref{weakGalerkininitial}. 
\end{lemma}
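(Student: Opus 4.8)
The plan is to reduce the Galerkin equations \eqref{weakGalerkin}--\eqref{weakGalerkininitial} to a finite linear system of scalar Volterra integro-differential equations for the coefficients $d_k(t)$, and then to solve that system by a successive-approximation (Picard) argument. First I would substitute the ansatz \eqref{u_m} into \eqref{weakGalerkin} and exploit two structural facts: that $\{\vp_j\}$ is orthonormal in $H$, so that $\rho(\ddot u_m(t),\vp_k)=\rho\,\ddot d_k(t)$, and that $\{\vp_j\}$ solves the eigenvalue problem \eqref{weakeigenvalue}, which gives $a(\vp_j,\vp_k)=\lambda_j(\vp_j,\vp_k)=\lambda_k\delta_{jk}$ and hence $a(u_m(t),\vp_k)=\lambda_k d_k(t)$. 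The convolution terms collapse to $\sum_{j=1}^m\sum_{i=1}^2 a_i(\vp_j,\vp_k)\,(\beta_i*d_j)(t)$. This produces a closed linear system of $m$ second-order integro-differential equations for the vector $\dd=(d_k)_{k=1}^m$, with initial data read off directly from \eqref{weakGalerkininitial}.

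Next I would recast this second-order system as a first-order one by setting $\DD=(\dd,\dot\dd)^{t}$, and then integrate once in $t$, interchanging the order of the resulting iterated integral by Fubini, to obtain a Volterra integral equation of the second kind of the form
\begin{equation*}
  \DD(t)=\int_0^t\cK(t,s)\,\DD(s)\,ds+\cF(t),\qquad t\in(0,T),
\end{equation*}
with a matrix kernel $\cK$ and an inhomogeneity $\cF$ built from $\Lambda=\diag(\lambda_1,\dots,\lambda_m)$, the numbers $a_i(\vp_j,\vp_k)$, and the data. The decisive observation is that, although the kernels $\beta_i$ are weakly singular at the origin (they behave like $t^{-1+\alpha_i}$ by \eqref{beta}), after this single integration they enter $\cK$ only through the quantities $\int_s^t\beta_i(r-s)\,dr$, which are bounded by $\|\beta_i\|_{L_1(\IR^+)}=\gamma_i<1$ because of \eqref{betaproperties}. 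Hence $\cK$ is bounded on the triangle $0\le s\le t\le T$ in the maximum matrix norm $|\cdot|_\infty$, and $\cF$ has finite sup-norm once the data are integrable in time; this is precisely what converts a singular problem into a standard, regular Volterra equation.

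Finally I would solve the integral equation by Picard iteration, setting $\DD^0=\cF$ and $\DD^{n+1}(t)=\int_0^t\cK(t,s)\,\DD^n(s)\,ds+\cF(t)$. With a uniform bound $M$ for $|\cK(t,s)|_\infty$ and a bound $M^0$ for $\sup_t|\DD^0(t)|_\infty$ — each following from the Cauchy--Schwarz inequality, the estimate \eqref{ai}, the trace theorem, and the $L_1$ bound on the $\beta_i$ — a straightforward induction yields $|\DD^{n+1}(t)-\DD^n(t)|_\infty\le M^{n+1}t^{n+1}M^0/(n+1)!$, so the telescoping series $\DD^0+\sum_n(\DD^{n+1}-\DD^n)$ converges uniformly on $[0,T]$; its limit $\DD$ is a continuous solution of the integral equation, and differentiating back recovers a solution of the ODE system with $\ddot d_k\in L_2(0,T)$. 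Uniqueness follows from the same factorial estimate applied to the difference of any two solutions. The main obstacle is conceptual rather than computational: one must recognize that a single time integration tames the weak singularity so that the classical contraction-type iteration applies, after which only the routine bookkeeping of the constants $M$ and $M^0$ remains.
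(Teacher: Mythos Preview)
Your argument is correct and complete: the reduction to a finite system of second-order Volterra integro-differential equations is exactly as in the paper, and your subsequent passage to a first-order system, single time-integration, and Picard iteration is sound. The key insight you identify --- that one integration replaces the weakly singular $\beta_i$ by the bounded quantities $\int_s^t\beta_i(r-s)\,dr\le\gamma_i$ --- is precisely what makes the classical contraction estimate go through.

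However, the paper's published proof takes a different route. After deriving the same ODE system \eqref{SystemweakGalerkin}--\eqref{SystemweakGalerkininitial}, the author applies the Laplace transform directly, using the explicit formula $\hat\beta_i(s)=\gamma_i/((\tau_i s)^{\alpha_i}+1)$ for the Mittag-Leffler kernels. This yields an algebraic system $Q\hat D=\hat F+P$ with an analytic coefficient matrix $Q(s)$, and existence and uniqueness are argued by invertibility of $Q$ for large $\mathrm{Re}(s)$ and the inverse Laplace transform (with the extreme cases $\alpha_i\in\{0,1\}$ serving as a guide). Your Picard approach is more elementary and more robust: it uses only $\beta_i\in L_1$ and would apply verbatim to any integrable kernel, not just Mittag-Leffler type, and it delivers the regularity $d_k\in H^2(0,T)$ cleanly. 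The Laplace-transform proof, by contrast, exploits the specific structure of the kernels to obtain an explicit transform-domain representation, but its inversion step is handled somewhat informally. Both are valid; yours is arguably the cleaner existence-and-uniqueness argument for this lemma.
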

\begin{proof}
Using \eqref{u_m} and the fact that $\{\vp _j\}_{j=1}^\infty$ is an 
ON-basis for $H$ and a solution of the eigenvalue problem 
\eqref{weakeigenvalue}, we obtain from
\eqref{weakGalerkin} that,
\begin{equation}   \label{SystemweakGalerkin}
  \begin{split}
    \rho \ddot d_k(t)+\lambda_k d_k(t) &- \sum_{j=1}^m \sum_{i=1}^2 
      a_i(\vp _j,\vp _k) (\beta_i*d_j)(t)\\
    &= f_k(t) + g_k(t),
      \quad k=1,\,\dots,\,m ,\,t\in (0,T),
  \end{split}
\end{equation}
where $*$ denotes the convolution, and 
$f_k(t)=(f(t),\vp _k),\,g_k(t)=(g(t),\vp _k)_{\Gamma_\text{N}}$. 
This is a linear system of second order ODE's
 with initial conditions
\begin{equation}   \label{SystemweakGalerkininitial}
  d_k(0) = (u^0,\vp_k),\quad \dot d_k(0) = (v^0,\vp_k),
    \quad k=1,\dots,m.
\end{equation}
The Laplace transform can be used, for example, to find the unique solution
of the system. 

We note that, the Laplace transform of the Mittag-Leffler function is, 
see e.g., \cite{Podlubny}, 
\begin{equation*}
  \cL(E_\alpha(a t^\alpha))=\frac{s^{\alpha-1} }{s^\alpha - a}\,,
  \quad {\rm Re}(s) > |a|^{1/\alpha},
\end{equation*}
and therefore, for the kernels $\beta_i,\ i=1,2$, 
defined in \eqref{beta}, we have
\begin{equation*}
  \begin{split}
    \hat{\beta}_i(s)
    &=\cL(\beta_i(t))
     =-\gamma_i s \cL(E_{\alpha_i}(-\tau_i^{-\alpha_i}t^{\alpha_i}))
     +\gamma_i E_{\alpha_i}(0)\\
    &=-\gamma_i s \frac{s^{\alpha_i-1} }
     {s^{\alpha_i}+\tau_i^{-\alpha_i}}+\gamma_i
     =\gamma_i - \gamma_i \frac{s^{\alpha_i} }
     {s^{\alpha_i}+\tau_i^{-\alpha_i}}\\
    &=\frac{\gamma_i}{(\tau_i s)^{\alpha_i} +1}<1\ ,
     \quad {\rm Re}(s) > \tau_i^{-1} .
  \end{split}
\end{equation*}

Now taking the Laplace transform of \eqref{SystemweakGalerkin}  
we get (we use an over-hat for the Laplace transform),
\begin{equation}   \label{SystemLaplaceTransform}
  \begin{split}
    (s^2\rho+\lambda_k)\hat d_k(s)
      &-\sum_{j=1}^m\sum_{i=1}^2 a_i(\vp _j,\vp _k)
        \frac{\gamma_i}{(\tau_i s)^{\alpha_i} +1}\hat d_j(s)\\
    &=\hat f_k(s)+\hat g_k(s)+s\rho d_k(0)+\rho \dot d_k(0),\\
    & \qquad k=1,\dots,m,\  {\rm Re}(s) > \frac{1}{\min \{\tau_1,\tau_2\}},
  \end{split}
\end{equation}
that can be written in the matrix form
\begin{equation*}
  Q \hat D =\hat F + P.
\end{equation*}
Here
\begin{equation*}
  \begin{split}
    &Q(s)=(Q_{j,k}(s))_{j,k=1}^m
    =\left\{\begin{array}{ll}
     \rho s^2 +\lambda_k 
     -\sum_{i=1}^2a_i(\vp_k,\vp_j)\frac{\gamma_i}{(\tau_i s)^{\alpha_i} +1}, 
     & j=k,\\
     -\sum_{i=1}^2a_i(\vp_k,\vp_j)\frac{\gamma_i}{(\tau_i s)^{\alpha_i} +1},
     &j\neq k,
    \end{array}\right.\\
    &\hat D(s)=(\hat d_k(s))_{k=1}^m, \quad 
    \hat F(s)=(\hat f_k(s)+\hat g_k(s))_{k=1}^m, \quad  
    P(s) =(s\rho d_k(0)+\rho\dot d_k(0))_{k=1}^m,
  \end{split}
\end{equation*}
and we note that the entries of matrix $A$ are analytic. 

For the extreme cases $\alpha_i=0,1,\ i=1,2$, components of 
$\hat D=Q^{-1}(\hat F+P)$ are proper rational functions, and it is well-known 
that the inverse Laplace transform is uniquely computable, using 
partial fractions expansion.  
Therefore $\alpha_1,\alpha_2\in(0,1)$ interpolates between these two cases, 
for which the inverse Laplace transform is uniquely computable. 
Indeed, having $\{s_j\}_{ j=1}^J$ finite poles for $\hat D=Q^{-1}(\hat F+P)$, 
we denote 
\begin{equation*}
  \overline a=\max \Big\{\frac{1}{\min \{\tau_1,\tau_2\}}, 
      {\rm Re}(s_j), \dots,{\rm Re}(s_J) \Big\}, \quad
  \underline a=\min \big\{{\rm Re}(s_j), \dots,{\rm Re}(s_J) \big\}.
\end{equation*}
Therefore $Q^{-1}(\hat F+P)$ is analytic for ${\rm Re}(s)>\overline a$,  
and the inverse Laplace transform is uniquely computable, 
 \cite[Theorem 8.5]{Folland}, and for the contour 
 $\underline a \leq {\rm Re}(s) \leq \overline a$ one can use the residue theorem. 

Hence, there is a unique solution for the linear system 
\eqref{SystemweakGalerkin}, 
and this completes the proof.
\end{proof}

For our analysis below to manipulate the non-homogeneous 
Neumann boundary condition, 
recalling \eqref{beta} and \eqref{betaproperties}, 
we define the functions
\begin{align*}   
  \xi_i(t)=\gamma_i-\int_0^t\!\beta_i(s)\,ds
    =\int_t^\infty\!\beta_i(s)\,ds
    =\gamma_i E_{\alpha_i}(t),\quad i=1,\,2,
\end{align*}
and it is easy to see that
\begin{align}   \label{xiproperties}
  \begin{split}
    D_t\xi_i(t)=-\beta_i(t)<0,\quad
    \xi_i(0)=\gamma_i,\quad \lim_{t\to\infty}\xi_i(t)=0,\quad
    0<\xi_i(t)\le\gamma_i.
  \end{split}
\end{align}
Besides, $\xi_i$ are completely monotonic functions, that is,
\begin{align*}
  (-1)^jD_t^j\xi_i(t)\ge 0,\quad t\in (0,\infty),\,j\in\IN,
\end{align*}
since the Mittag-Leffler functions $E_{\alpha_i},\,\alpha_i\in[0,1]$ are
 completely monotonic.
Consequently, an important property of $\xi_i,i=1,\,2$, is that, they are positive type kernels, 
that is, they are continuous and, 
for any $T\ge 0$, satisfy
\begin{align}   \label{positivetype}
  \int_0^T\!\int_0^t\!\xi_i(t-s)\phi(t)\phi(s)\,ds\,dt\ge 0,
    \quad \forall\phi\in \cC([0,T]).
\end{align}
For more details on these concepts and their properties see, e.g., 
\cite{Bateman} 
and \cite{Widder}.

Our plan is to send $m\to \infty$ and prove existence of a 
weak solution of \eqref{weaksolution1}-\eqref{weaksolution3}. 
To this end, we first need some a priori estimates that are 
independent of $m$, that is given in the next theorem.
\begin{theorem}
If 
$u^0\in V,\,v^0\in H,\,g\in W_1^1((0,T);H_{\Gamma_\text{N}}),\, 
f\in L_2((0,T);H)$, there is a constant 
$C=C(\Omega, \gamma_1, \gamma_2,\rho,T)$ such that,
\begin{equation}   \label{aprioriestimate}
  \begin{split}
    \|u_m\|_{L_\infty((0,T);V)} 
      &+ \|\dot u_m\|_{L_\infty((0,T);H)}
        + \|\ddot u_m\|_{L_2((0,T);V^*)}\\
      &\le C \big\{ \|u^0\|_V+ \|v^0\| 
        + \|g\|_{W_1^1((0,T);H_{\Gamma_\text{N}})}
        + \|f\|_{L_2((0,T);H)}\big\} .
  \end{split}
\end{equation}
\end{theorem}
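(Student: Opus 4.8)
The plan is to establish the energy estimate by testing the Galerkin system \eqref{weakGalerkin} with $\dot u_m$, i.e.\ by multiplying the $k$-th equation by $\dot d_k(t)$ and summing over $k$. Since $a(\cdot,\cdot)$ is the inner product of $V$, this produces a differential identity whose principal part is $\frac{d}{dt}\big(\frac{\rho}{2}\|\dot u_m\|^2+\frac12\|u_m\|_V^2\big)$, while the forcing contributes $(f,\dot u_m)+(g,\dot u_m)_{\Gamma_\text{N}}$. I would then integrate in time over $(0,t)$. The whole difficulty is concentrated in the memory term $\sum_i\int_0^t a_i\big((\beta_i*u_m)(\tau),\dot u_m(\tau)\big)\,d\tau$, and this is precisely where the weak singularity of $\beta_i$ must be circumvented, since $\beta_i$ cannot be differentiated at the origin.

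The key step is to use $D_t\xi_i=-\beta_i$ together with $\xi_i(0)=\gamma_i$ from \eqref{xiproperties} and integrate by parts in $s$ in the inner convolution, which gives
\[
  (\beta_i*u_m)(\tau)=\gamma_i\,u_m(\tau)-\xi_i(\tau)\,u_m(0)-(\xi_i*\dot u_m)(\tau).
\]
Inserting this into $a_i(\cdot,\dot u_m(\tau))$ and integrating in $\tau$ splits the memory contribution into three pieces: the term $\frac{\gamma_i}{2}\big(a_i(u_m(t),u_m(t))-a_i(u_m(0),u_m(0))\big)$; an initial-data term involving $u_m(0)$ and $\xi_i$; and the quadratic form $\int_0^t\!\int_0^\tau\xi_i(\tau-s)\,a_i(\dot u_m(s),\dot u_m(\tau))\,ds\,d\tau$. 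By the positive type property \eqref{positivetype}, applied through the spectral representation of the symmetric positive semidefinite form $a_i$, this last quadratic form is nonnegative and contributes with the favorable sign to the left-hand side, so it may simply be dropped. Collecting the $\gamma_i$-terms with the elastic energy and using \eqref{ai} together with $a_1+a_2=\|\cdot\|_V^2$ and $\max(\gamma_1,\gamma_2)<1$, I obtain on the left-hand side the net coercive bound $\frac12\big(1-\max(\gamma_1,\gamma_2)\big)\|u_m(t)\|_V^2$; this is exactly where the dissipativity assumption $\gamma_i<1$ is used.

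It then remains to estimate the right-hand side and the initial-data pieces. The term $\int_0^t(f,\dot u_m)\,d\tau$ is controlled by $\|f\|_{L_2((0,T);H)}$ and $\int_0^t\|\dot u_m\|^2\,d\tau$ via Cauchy--Schwarz and Young. For the boundary term, since $g\in W_1^1((0,T);H_{\Gamma_\text{N}})$, I integrate by parts in time,
\[
  \int_0^t(g,\dot u_m)_{\Gamma_\text{N}}\,d\tau
  =(g(t),u_m(t))_{\Gamma_\text{N}}-(g(0),u_m(0))_{\Gamma_\text{N}}
   -\int_0^t(\dot g,u_m)_{\Gamma_\text{N}}\,d\tau,
\]
and then apply the trace inequality $\|v\|_{\Gamma_\text{N}}\le C_{\rm Trace}\|v\|_V$ followed by Young's inequality, absorbing the resulting $\|u_m(t)\|_V^2$ and $\sup_\tau\|u_m\|_V^2$ contributions into the coercive term with a small coefficient. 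The initial-data piece from the memory term is treated the same way after one further integration by parts in $\tau$, using $0<\xi_i\le\gamma_i$ and $\|\beta_i\|_{L_1(\IR^+)}=\gamma_i$. Taking the supremum over $t\in(0,T)$, absorbing the small terms, and applying Gronwall's inequality closes the estimate for $\|u_m\|_{L_\infty((0,T);V)}+\|\dot u_m\|_{L_\infty((0,T);H)}$.

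Finally, for $\|\ddot u_m\|_{L_2((0,T);V^*)}$ I would test \eqref{weakGalerkin} against an arbitrary $v\in V$: writing $v=\sum_k c_k\vp_k$ and $v^{(m)}=\sum_{k=1}^m c_k\vp_k$, orthogonality of $\{\vp_k\}$ in $H$ gives $\langle\ddot u_m,v\rangle=(\ddot u_m,v^{(m)})$, while orthogonality in $V$ gives $\|v^{(m)}\|_V\le\|v\|_V$. Bounding $a(u_m,v^{(m)})$, the convolution term $\sum_i(\beta_i*\|u_m\|_V)(t)$, and the data terms, then taking the $L_2(0,T)$ norm in time and using Young's convolution inequality $\|\beta_i*\phi\|_{L_2}\le\gamma_i\|\phi\|_{L_2}$ together with the already-established bound on $\|u_m\|_{L_\infty((0,T);V)}$, yields the claimed control of $\ddot u_m$ and completes the proof. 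The main obstacle throughout is the memory term: the entire argument hinges on the identity $\beta_i=-D_t\xi_i$ and the positive type property of $\xi_i$, which together convert the nondifferentiable, weakly singular convolution into a coercive part plus a nonnegative quadratic form.
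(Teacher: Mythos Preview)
Your proposal is correct and follows essentially the same approach as the paper: the same integration by parts via $\beta_i=-D_t\xi_i$ to convert the memory term into the positive-type quadratic form in $\xi_i$ (which is dropped), the same integration by parts in time for the Neumann term $g$, and the same duality argument with the spectral projection $v\mapsto v^{(m)}$ for the $\ddot u_m$ bound. The only cosmetic difference is that you close Step~1 with Gronwall after estimating $\int_0^t(f,\dot u_m)$ by $\int_0^t\|\dot u_m\|^2$, whereas the paper bounds it by $\big(\sup_{r\le t}\|\dot u_m(r)\|\big)\|f\|_{L_1((0,t);H)}$ and absorbs the supremum directly via Young's inequality with a small constant---both routes work.
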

\begin{proof}
We organize our proof in $2$ steps. 

1. First, we prove the estimate \eqref{aprioriestimate} for 
$u_m$ and $\dot u_m$, that is based on a standard energy method.  
Since $\beta_i(t-s)=D_s\xi_i(t-s)$ and $\xi_i(0)=\gamma_i$ , by \eqref{xiproperties}, 
we first write \eqref{weakGalerkin}, after partial integration in time, as
\begin{equation*}  
  \begin{split}
    \rho (\ddot u_m(t),\vp_k) &+ a(u_m(t),\vp_k)
     -\sum_{i=1}^2 \gamma_i a_i(u_m(t),\vp_k)\\
    &+\sum_{i=1}^2 \int_0^t\!\xi_i(t-s) a_i(\dot u_m(s),\vp_k)\,ds \\
    &\!\!\!\!\!\!\!= (f(t),\vp_k) + (g(t),\vp_k)_{\Gamma_\text{N}}\\
    &-\sum_{i=1}^2 \xi_i(t)a_i(u_m(0),\vp_k) ,
    \quad k=1,\,\dots,\,m ,\,t\in (0,T).
  \end{split}
\end{equation*}
Then multiplying by $\dot d_k(t)$, summing over $k=1,\dots,m$, 
and integrating with respect to $t$, we have
\begin{equation*} 
  \begin{split}
    \rho \|\dot u_m(t)\|^2 + &(1-\bar{\gamma})\|u_m(t)\|_V^2
      +2\sum_{i=1}^2\int_0^t\!\int_0^r\!\xi_i(r-s)
      a_i(\dot u_m(s),\dot u_m(r))\,ds\,dr\\
    &\le \rho\|\dot u_m(0)\|^2 + (1-\underline{\gamma})\|u_m(0)\|_V^2\\
    &\quad +2\int_0^t\!(f(r),\dot u_m(r))\, dr
      +2\int_0^t\! (g(r),\dot u_m(r))_{\Gamma_\text{N}}\,dr\\
    &\quad-2\sum_{i=1}^2\int_0^t\!
    \xi_i(r)a_i(u_m(0),\dot u_m(r))\,dr,
  \end{split}
\end{equation*}
where $\bar \gamma = \max\{\gamma_1,\,\gamma_2\}$ 
and $\underline \gamma = \min\{\gamma_1,\,\gamma_2\}$. 
We note that $0< \underline \gamma, \bar\gamma <1$. 
Since $\xi_i,\,i=1,\,2$ are positive type kernels, 
recalling \eqref{positivetype}, the third term of the left hand side is non-negative. 
Then integration by parts in the last two terms at the right 
side yields
\begin{equation*}  
  \begin{split}
    \rho\|\dot u_m(t)\|^2 &+(1- \bar{\gamma})\|u_m(t)\|_V^2\\
    &\le \rho\|\dot u_m(0)\|^2 +(1- \underline{\gamma})\|u_m(0)\|_V^2
     +2\int_0^t\!(f(r),\dot u_m(r))\, dr\\
    &\quad -2\int_0^t\!(\dot g(r),u_m(r))_{\Gamma_\text{N}}\,dr
      +2(g(t),u_m(t))_{\Gamma_\text{N}}
      -2(g(0),u_m(0))_{\Gamma_\text{N}}\\
    &\quad -2\sum_{i=1}^2\int_0^t\!
    \beta_i(r)a_i(u_m(0),u_{m}(r))\,dr\\
    &\quad -2\sum_{i=1}^2\xi_i(t)a_i(u_m(0),u_{m}(t))
      +2\sum_{i=1}^2\xi_i(0)a_i(u_m(0),u_{m}(0)).
  \end{split}
\end{equation*}
This, using the Cauchy-Schwarz inequality, the trace theorem, 
$\|\beta_i\|_{L_1(\mathbb{R}^+)}=\gamma_i$, 
$\xi_i(t)\le \xi_i(0)=\gamma_i$, 
and \eqref{ai}, implies
\begin{equation*}
  \begin{split}
    \rho\|&\dot u_m(t)\|^2 +(1- \bar{\gamma})\|u_m(t)\|_V^2\\
    &\le \rho\|\dot u_m(0)\|^2 + (1-\underline{\gamma})\|u_m(0)\|_V^2\\
    &\quad +2/C_1\max_{0\le r\le t}\|\dot u_m(r)\|^2
      +C_1\Big(\int_0^t\!\|f(r)\|\,dr\Big)^2\\
    &\quad +2C_{\rm Trace}/C_2\max_{0\le r\le t}\| u_{m}(r)\|_V^2
      +2C_{\rm Trace}C_2\Big(\int_0^t\!\|
      \dot g(r)\|_{H^{\Gamma_\text{N}}}\Big)^2\\
    &\quad +2C_{\rm Trace}/C_3\| u_{m}(t)\|_V^2
      +2C_{\rm Trace}C_3\|g(t)\|_{H_{\Gamma_\text{N}}}^2\\
    &\quad +2C_{\rm Trace}/C_4\| u_{m}(0)\|_V^2
      +2C_{\rm Trace}C_4\| g(0)\|_{H_{\Gamma_\text{N}}}^2\\
    &\quad +2/C_5\Big(\sum_{i=1}^2 \gamma_i\Big)\| u_{m}(0)\|_V^2
      +2C_5\Big(\sum_{i=1}^2\gamma_i\Big)
        \max_{0\le r\le t}\| u_{m}(r)\|_V^2\\
    &\quad +2/C_6\Big(\sum_{i=1}^2\gamma_i\Big)\| u_{m}(0)\|_V^2
      +2C_6\Big(\sum_{i=1}^2\gamma_i\Big)\| u_{m}(t)\|_V^2
      +2\Big(\sum_{i=1}^2\gamma_i\Big)\| u_{m}(0)\|_V^2 .
  \end{split}
\end{equation*}
Hence, considering the facts that $C_{\rm Trace}=C(\Omega)$, 
$\|\dot u_m(0)\|\le\|v^0\|$, and $\|u_m(0)\|_V\le \|u^0\|_V$, 
for some constant $C=C(\Omega,\gamma_1,\gamma_2,\rho, T)$, we have
\begin{equation*}
  \begin{split}
    \|\dot u_m\|_{L_\infty((0,T);H)}^2
    &+\|u_m\|_{L_\infty((0,T);V)}^2\\
    &\le C \big\{\|v^0\|^2+\|u^0\|_V^2
      +\|g\|_{L_\infty((0,T);H_{\Gamma_\text{N}})}^2\\
    &\qquad\quad
      +\|f\|_{L_1((0,T);H)}^2
      +\|\dot g\|_{L_1((0,T);H_{\Gamma_\text{N}})}^2
        \big\}.
  \end{split}
\end{equation*}
This, and the facts that 
$\|g\|_{L_\infty((0,T);H_{\Gamma_\text{N}})}\le C 
\|g\|_{W_1^1((0,T);H_{\Gamma_\text{N}})}$, by Sobolev inequality, 
and $\|f\|_{L_1((0,T);H)}\le C\|f\|_{L_2((0,T);H)}$, imply
\begin{equation}   \label{apriori1}
  \begin{split}
    \|\dot u_m&\|_{L_\infty((0,T);H)}^2
    +\|u_m\|_{L_\infty((0,T);V)}^2\\
    &\le C \big\{\|v^0\|^2+\|u^0\|_V^2
      +\|g\|_{W_1^1((0,T);H_{\Gamma_\text{N}})}^2
      +\|f\|_{L_2((0,T);H)}^2 \big\}.
  \end{split}
\end{equation}

2. Now we need to find a bound for $\ddot u_m$, 
that is by duality with a suitable decomposition of the test 
functions $v\in V$. 
For any fixed $v\in V$ with 
$\|v\|_V\le 1$, we write $v=v^1+v^2$, where 
$v^1\in \Span\{\vp_j\}_{j=1}^m,\,v^2\in \Span(\{\vp_j\}_{j=1}^m)^\perp$. 
We note that $\|v^1\|_V \le 1$. 
 Then from \eqref{weakGalerkin} we obtain,
\begin{equation*}
  \begin{split}
    \rho\langle \ddot u_m(t),v\rangle = \rho(\ddot u_m(t),v^1)
    &= (f(t),v^1)+(g(t),v^1)_{\Gamma_\text{N}}-a(u_m(t),v^1)\\
    &\quad +\sum_{i=1}^2\int_0^t\!\beta_i(t-s)a_i(u_m(s),v^1)\,ds,
  \end{split}
\end{equation*}
that, using the Cauchy-Schwarz inequality,  
 the trace theorem, and \eqref{ai}, implies
 \begin{equation*}
    |\langle \ddot u_m(t),v\rangle| 
    \le \frac{1}{\rho} \Big(
    \|f(t)\|+C_{\rm Trace}\|g(t)\|+\|u_m(t)\|_V
    +\max_{0 \le s \le t} \|u_m(s)\|_V \sum_{i=1}^2\gamma_i
    \Big).
\end{equation*}
This, using \eqref{apriori1}, in a standard way implies
\begin{equation*}
  \begin{split}
    \|\ddot u_m\|_{L_2((0,T);V^*)}^2 
    &\le C\big\{ \|f\|_{L_2((0,T);H)}^2
      +\|g\|_{L_2((0,T);H_{\Gamma_\text{N}})}^2\\
    &\quad\quad+\|v^0\|^2+\|u^0\|_V^2
      +\|g\|_{W_1^1((0,T);H_{\Gamma_\text{N}})}^2
      \big\}.
  \end{split}
\end{equation*}
Therefore, for some constant $C=C(\Omega,\gamma_1,\gamma_2,\rho, T)$,
\begin{equation*}
    \|\ddot u_m\|_{L_2((0,T);V^*)}^2
    \le C\big\{ \|v^0\|^2+\|u^0\|_V^2
      +\|g\|_{W_1^1((0,T);H_{\Gamma_\text{N}})}^2
      +\|f\|_{L_2((0,T);H)}^2
    \big\}.
\end{equation*}
This and \eqref{apriori1} imply  
the estimate \eqref{aprioriestimate}, and the proof is complete.
\end{proof}

\begin{rem}
We note that Lemma 1 and Theorem 1 are also applied  to the simplified 
model problem \eqref{strongform2} in Remark 1. 
That is, there exists a unique function $u_m$ of the form 
\eqref{u_m} satisfying
\begin{equation*}  
  \begin{split}
    \rho (\ddot u_m(t),\vp_k&) + a(u_m(t),\vp_k)
    -\int_0^t\! \beta(t-s) a(u_m(s), \vp_k)\,ds\\
    &= (f(t),\vp_k) + (g(t),\vp_k)_{\Gamma_\text{N}},
    \quad k=1,\,\dots,\,m ,\,t\in (0,T),
  \end{split}
\end{equation*}
with initial conditions \eqref{weakGalerkininitial}. 
Moreover, the a priori estimate \eqref{aprioriestimate} 
still holds with $C=C(\Omega, \gamma, \rho,T)$.
\end{rem}
\subsection{Existence and uniqueness of the weak solution}
First we use Theorem 1, and pass to limits $m \to \infty$, 
to prove existence a weak solution of \eqref{strongform}, 
that is a solution of \eqref{weaksolution1}--\eqref{weaksolution3}. 
Then we prove uniqueness in Theorem 3.

\begin{theorem}
If 
$u^0\in V,\,v^0\in H,\,g\in W_1^1((0,T);H_{\Gamma_\text{N}}),\, 
f\in L_2((0,T);H)$, there exists a weak 
solution of \eqref{strongform}. 
\end{theorem}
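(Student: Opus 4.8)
The plan is to let $m\to\infty$ in the Galerkin system \eqref{weakGalerkin}, using the $m$-independent a priori bound \eqref{aprioriestimate} of Theorem 1 to extract weakly convergent subsequences, and then to identify the limit as a weak solution satisfying \eqref{weaksolution1}--\eqref{weaksolution3}. By \eqref{aprioriestimate} the sequences $\{u_m\}$, $\{\dot u_m\}$, $\{\ddot u_m\}$ are bounded in $L_\infty((0,T);V)$, $L_\infty((0,T);H)$ and $L_2((0,T);V^*)$, respectively. By the Banach--Alaoglu theorem there is a subsequence (not relabeled) and a limit $u$ with
\begin{equation*}
  u_m \stackrel{*}{\rightharpoonup} u \ \text{in}\ L_\infty((0,T);V),\quad
  \dot u_m \stackrel{*}{\rightharpoonup} \dot u \ \text{in}\ L_\infty((0,T);H),\quad
  \ddot u_m \rightharpoonup \ddot u \ \text{in}\ L_2((0,T);V^*).
\end{equation*}
Testing the distributional derivatives against $\cC_c^\infty(0,T)$ functions identifies the weak limits of $\dot u_m$ and $\ddot u_m$ with the derivatives of $u$, so that $u$ inherits the regularity \eqref{weaksolution1}.

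Next I would fix $N\in\IN$ and take $v\in\Span\{\vp_j\}_{j=1}^N$; for every $m\ge N$ the identity \eqref{weakGalerkin} holds with $\vp_k$ replaced by $v$. Multiplying by a scalar $\chi\in\cC^1([0,T])$, integrating over $(0,T)$, and sending $m\to\infty$ term by term, the terms $\rho\langle\ddot u_m,v\rangle$, $a(u_m,v)$ and the data $(f,v)$, $(g,v)_{\Gamma_\tN}$ pass to the limit, since after integration against $\chi$ each defines a bounded linear functional on the space in which the relevant sequence converges weakly. For the memory term I would interchange the order of integration,
\begin{equation*}
  \int_0^T\!\!\int_0^t\!\beta_i(t-s)a_i(u_m(s),v)\chi(t)\,ds\,dt
  = \int_0^T\! a_i(u_m(s),v)\Big(\int_s^T\!\beta_i(t-s)\chi(t)\,dt\Big)\,ds,
\end{equation*}
which is legitimate because $\beta_i\in L_1(\IR^+)$ by \eqref{betaproperties}; the inner integral is a bounded function of $s$, so the whole expression is a bounded linear functional of $u_m\in L_2((0,T);V)$ and converges. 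This gives the $\chi$-integrated form of \eqref{weaksolution2} for all $v\in\Span\{\vp_j\}_{j=1}^N$; letting $N\to\infty$, using density of $\bigcup_N\Span\{\vp_j\}_{j=1}^N$ in $V$, and then varying $\chi$, recovers \eqref{weaksolution2} for all $v\in V$ and a.e.\ $t$.

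Finally I would recover the initial conditions \eqref{weaksolution3}. Choosing $\chi\in\cC^2([0,T])$ with $\chi(T)=\dot\chi(T)=0$ and integrating the acceleration term by parts twice in time produces, in both the Galerkin identity and its limit, the boundary contributions $-\chi(0)(\dot u_m(0),v)+\dot\chi(0)(u_m(0),v)$ and $-\chi(0)\langle\dot u(0),v\rangle+\dot\chi(0)\langle u(0),v\rangle$. Since \eqref{weakGalerkininitial} gives $u_m(0)\to u^0$ and $\dot u_m(0)\to v^0$ in $H$, while the interior terms already agree by the previous step, comparing boundary terms for arbitrary $\chi(0),\dot\chi(0)$ yields $\langle u(0),v\rangle=(u^0,v)$ and $\langle\dot u(0),v\rangle=(v^0,v)$ for every $v\in V$, that is $u(0)=u^0$ and $\dot u(0)=v^0$. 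Uniqueness is not needed here, being deferred to Theorem 3.

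The step I expect to be the main obstacle is the passage to the limit in the memory term: because $\beta_i$ is only weakly singular and integrable, one cannot commute the limit with a pointwise-in-$t$ convolution directly. The Fubini interchange above, together with the $L_1$-boundedness of the kernel from \eqref{betaproperties}, is precisely what turns the convolution into a weakly continuous operation; the remaining care is the routine verification that the weak-$*$ limits of $\dot u_m$ and $\ddot u_m$ are consistent with the distributional derivatives of $u$.
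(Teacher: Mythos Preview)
Your proposal is correct and follows essentially the same four-step approach as the paper: use the $m$-independent bound \eqref{aprioriestimate} to extract weakly convergent subsequences, verify \eqref{weaksolution1}, pass to the limit in the Galerkin identity tested against finite combinations of eigenfunctions and extend by density to obtain \eqref{weaksolution2}, and recover the initial conditions \eqref{weaksolution3} by integrating the acceleration term by parts twice and comparing boundary contributions. Your treatment is in fact slightly more explicit than the paper's in two places---you spell out the Fubini interchange that makes the memory term weakly continuous, and you use weak-$*$ convergence in $L_\infty$ rather than embedding into $L_2$---but neither constitutes a genuinely different route.
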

\begin{proof}
We need to show that there is a solution of 
\eqref{weaksolution1}-\eqref{weaksolution3}, that is a 
weak solution of \eqref{strongform}. 
We organize our proof in 4 steps. 

1. First we note that the estimate \eqref{aprioriestimate} 
does not depend on $m$, so we have
\begin{equation*}
  \begin{split}
    \|u_m\|_{L_\infty((0,T);V)} 
      &+ \|\dot u_m\|_{L_\infty((0,T);H)}
        + \|\ddot u_m\|_{L_2((0,T);V^*)}\\
      &\le K=K(\Omega,\gamma_1,\gamma_2,T,u^0,v^0,f,g) .
  \end{split}
\end{equation*}
This means that the sequences 
$\{u_m\}_{m=1}^\infty$, $\{\dot u_m\}_{m=1}^\infty$, 
$\{\ddot u_m\}_{m=1}^\infty$ are bounded,
\begin{equation}   \label{boundedness}
  \begin{split}
    &\{u_m\}_1^\infty \text{ is bounded in }
      L_\infty((0,T);V)\subset L_2((0,T);V),\\
    &\{\dot u_m\}_1^\infty \text{ is bounded in }
      L_\infty((0,T);H)\subset L_2((0,T);H),\\
    &\{\ddot u_m\}_1^\infty \text{ is bounded in }
      L_2((0,T);V^*).
  \end{split}
\end{equation}
2. Now we prove that the sequence $\{u_m\}_{m=1}^\infty$ passes 
to a limit that satisfies \eqref{weaksolution1}.  
From \eqref{boundedness} and a classical result in 
functional analysis, we conclude that the sequences 
$\{u_m\}_{m=1}^\infty$, $\{\dot u_m\}_{m=1}^\infty$, 
$\{\ddot u_m\}_{m=1}^\infty$ are weakly precompact. 
That is, there are subsequences of 
$\{u_m\}_{m=1}^\infty$, $\{\dot u_m\}_{m=1}^\infty$, 
$\{\ddot u_m\}_{m=1}^\infty$, 
such that
\begin{align}  \label{weakconvergence}
  \begin{aligned}
    &u_l \rightharpoonup u&&\text{in}\quad L_2((0,T);V),\\   
    &\dot u_l \rightharpoonup \dot u&&\text{in}\quad L_2((0,T);H),\\
    &\ddot u_l \rightharpoonup \ddot u&&\text{in}\quad L_2((0,T);V^*),
  \end{aligned}
\end{align}
where the index $l$ is a replacement of the label of the subsequences and '$\rightharpoonup$' denotes weak convergence.
 Consequently, the limit function $u$ satisfies \eqref{weaksolution1}. 
 So it remains to verify \eqref{weaksolution2} and \eqref{weaksolution3}. 

3. To show \eqref{weaksolution2} we fix a positive integer $N$ and we choose
 $v\in\cC([0,T];V)$ of the form
\begin{equation}   \label{v1}
  v(t)=\sum_{j=1}^N h_j(t)\vp _j.
\end{equation}
Then we take  $l\ge N$ and by \eqref{weakGalerkin} we have
\begin{equation}   \label{weakGalerkinSubseq}
  \begin{split}
    \int_0^T\Big(\rho\langle \ddot u_l,v\rangle
    +a(u_l,v)-\sum_{i=1}^{2}\int_0^t\!
      \beta_i(t-s)&a_i(u_l(s),v)\,ds\Big)\,dt\\
    &=\int_0^T\!\big((f,v)+(g,v)_{\Gamma_\text{N}}\big)\,dt.
  \end{split}
\end{equation}
This, by \eqref{weakconvergence}, implies in the limit 
\begin{equation}   \label{weakGalerkinSubseqLimit}
  \begin{split}
    \int_0^T\Big(\rho\langle \ddot u,v\rangle
    +a(u,v)-\sum_{i=1}^{2}\int_0^t\!
    \beta_i(t-s)&a_i(u(s),v)\,ds\Big)\,dt\\
    &=\int_0^T\!\big((f,v)+(g,v)_{\Gamma_\text{N}}\big)\,dt.
  \end{split}
\end{equation}
Since functions of the form \eqref{v1} are dense in $L_2((0,T);V)$, 
this equality then holds for all functions $v\in L_2((0,T);V)$, 
and further it implies \eqref{weaksolution2}. 
 
4. Finally, we need to show that $u$ satisfies the initial conditions   \eqref{weaksolution3}. 
Let $v\in\cC^2([0,T];V)$ be any function with $v(T)=\dot v(T)=0$. 
Then by partial integration in \eqref{weakGalerkinSubseq} we have
\begin{equation*}
  \begin{split}
    \int_0^T\Big(\rho\langle u_{l},\ddot v\rangle&+a(u_l,v)
      -\sum_{i=1}^{2}\int_0^t\!
      \beta_i(t-s)a_i(u_l(s),v)\,ds\Big)\,dt\\
    &=\int_0^T\!\big((f,v)+(g,v)_{\Gamma_\text{N}}\big)\,dt
      -\rho(u_l(0),\dot v(0))+\rho(\dot u_l(0),v(0)) ,
  \end{split}
\end{equation*}
so that, recalling \eqref{weakconvergence} and 
 \eqref{weakGalerkininitial}, in the limit we conclude,
\begin{equation*}
  \begin{split}
    \int_0^T\Big(\rho\langle u,\ddot v\rangle&+a(u,v)
      -\sum_{i=1}^{2}\int_0^t\!\beta_i(t-s)a_i(u(s),v)\,ds\Big)\,dt\\
    &=\int_0^T\!\big((f,v)+(g,v)_{\Gamma_\text{N}}\big)\,dt
      -\rho(u^0,\dot v(0))+\rho(v^0,v(0)) .
  \end{split}
\end{equation*}
On the other hand integration by parts in \eqref{weakGalerkinSubseqLimit}
 gives,
\begin{equation*}
  \begin{split}
    \int_0^T\Big(\rho\langle u,\ddot v\rangle&+a(u,v)
      -\sum_{i=1}^{2}\int_0^t\!\beta_i(t-s)a_i(u(s),v)\,ds\Big)\,dt\\
    &=\int_0^T\!\big((f,v)+(g,v)_{\Gamma_\text{N}}\big)\,dt
      -\rho(u(0),\dot v(0))+\rho(v(0),v(0)) .
  \end{split}
\end{equation*}
Compairing the last two identities we conclude 
\eqref{weaksolution3}, since $v(0),\,\dot v(0)$ are arbitrary. 

Hence $u$ satisfies \eqref{weaksolution1}-\eqref{weaksolution3}, 
that is $u$  is a weak solution of \eqref{strongform}. 
The proof is now complete.
 
\end{proof}

\begin{theorem}
If 
$u^0\in V,\,v^0\in H,\,g\in W_1^1((0,T);H_{\Gamma_\text{N}}),\, 
f\in L_2((0,T);H)$, then the weak solution of \eqref{strongform} is unique. 
\end{theorem}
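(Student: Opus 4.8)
The plan is to combine linearity with a \emph{time-integrated} energy identity, the standard device for second-order hyperbolic problems whose velocity lacks $V$-regularity. Suppose $u_1,u_2$ are two weak solutions of \eqref{strongform} for the same data and set $w=u_1-u_2$. By linearity of $a$, $a_i$ and of the equation, $w$ satisfies the homogeneous problem
\begin{equation*}
  \rho\langle \ddot w(t),v\rangle + a(w(t),v) - \sum_{i=1}^2\int_0^t\!\beta_i(t-s)a_i(w(s),v)\,ds = 0, \quad \forall v\in V,\ \text{a.e. } t,
\end{equation*}
with $w(0)=0$, $\dot w(0)=0$, and with the regularity \eqref{weaksolution1}. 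The naive energy argument would test this with $v=\dot w(t)$, but \eqref{weaksolution1} only gives $\dot w\in L_2((0,T);H)$, so $\dot w(t)\notin V$ in general. To circumvent this I would integrate the equation once in time before testing.

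First I would integrate the homogeneous identity over $(0,t)$; since $\dot w(0)=0$ this replaces $\ddot w$ by $\dot w$. Writing $W(t)=\int_0^t w(r)\,dr\in V$ and interchanging the order of integration in the memory term, using $\int_\sigma^t\beta_i(r-\sigma)\,dr=\gamma_i-\xi_i(t-\sigma)$ from the definition of $\xi_i$, the integrated identity becomes
\begin{equation*}
  \rho\langle \dot w(t),v\rangle + a(W(t),v) - \sum_{i=1}^2\gamma_i a_i(W(t),v) + \sum_{i=1}^2\int_0^t\!\xi_i(t-\sigma)a_i(w(\sigma),v)\,d\sigma = 0,\quad \forall v\in V.
\end{equation*}
Because $w(t)\in V$ for a.e. $t$, I may now legitimately take $v=w(t)=\dot W(t)$. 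Using $w\in H^1((0,T);H)$ and $W\in H^1((0,T);V)$, the first three terms are exact time derivatives: $\rho\langle\dot w,w\rangle=\tfrac{\rho}{2}\tfrac{d}{dt}\|w\|^2$, $a(W,\dot W)=\tfrac12\tfrac{d}{dt}\|W\|_V^2$, and $a_i(W,\dot W)=\tfrac12\tfrac{d}{dt}a_i(W,W)$.

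Integrating the resulting scalar identity over $(0,\tau)$ and using $w(0)=0$, $W(0)=0$ would yield
\begin{equation*}
  \frac{\rho}{2}\|w(\tau)\|^2 + \frac12\Big(\|W(\tau)\|_V^2 - \sum_{i=1}^2\gamma_i a_i(W(\tau),W(\tau))\Big) + \sum_{i=1}^2\int_0^\tau\!\int_0^t\!\xi_i(t-\sigma)a_i(w(\sigma),w(t))\,d\sigma\,dt = 0 .
\end{equation*}
All three contributions are non-negative: the first trivially; the bracket is bounded below by $\tfrac12(1-\bar\gamma)\|W(\tau)\|_V^2\ge0$ since $a=a_1+a_2$, each $a_i$ is positive semidefinite, and $\gamma_i\le\bar\gamma<1$, using \eqref{ai}; and the double memory integral is non-negative by the positive type property \eqref{positivetype}. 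This last point is the step I expect to be the main obstacle, since \eqref{positivetype} is stated for scalar $\phi\in\cC([0,T])$, whereas here it is applied to the symmetric positive semidefinite bilinear form $a_i$. I would settle it by diagonalizing $a_i$ in an $a$-orthonormal basis $\{\psi_k\}$, writing $a_i(w(\sigma),w(t))=\sum_k\mu_k c_k(\sigma)c_k(t)$ with $\mu_k\ge0$ and $c_k(t)=a(w(t),\psi_k)$, so the memory term splits into a sum of scalar positive type integrals, each non-negative by \eqref{positivetype} (extended from continuous to $L_2$ scalars by density). Since three non-negative terms sum to zero, each vanishes; in particular $\|w(\tau)\|=0$ for every $\tau$, whence $w\equiv0$ and $u_1=u_2$.
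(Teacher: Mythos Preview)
Your proof is correct and follows essentially the same route as the paper: both use the Ladyzhenskaya time-integration device (you integrate the equation in time and then test with $w(t)=\dot W(t)$; the paper equivalently tests the original equation with $v(t)=\int_t^{r} u(\omega)\,d\omega$) together with the positive type property \eqref{positivetype} of $\xi_i$ and $\gamma_i<1$ to obtain a non-negative energy identity that forces $w\equiv 0$. Your discussion of extending \eqref{positivetype} from scalar $\phi$ to the bilinear forms $a_i$ is in fact more explicit than the paper, which invokes this step without comment.
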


\begin{proof}
To prove uniqueness, it is enough to show that $u=0$ is 
the solution of \eqref{weaksolution1}--\eqref{weaksolution3} for 
$u^0=v^0=f=g=0$. Let us fix $r\in [0,T]$ and define
\begin{equation*}
  v(t)=\left\{
  \begin{aligned}
    &\int_t^r\!u(\omega)\, d\omega&&0\le t\le r,\\
    &0&&r\le t\le T.
  \end{aligned}
  \right.
\end{equation*}
We note that 
\begin{equation}   \label{vproperties}
  v(t)\in V,\quad v(r)=0,\quad \dot v(t)=-u(t).
\end{equation}
Then inserting $v$ in \eqref{weaksolution2} and integrating 
with respect to $t$, we have
\begin{equation}   \label{weaksolutionintegral}
  \int_0^r\!\big(\rho \langle \ddot u,v\rangle + a(u,v)\big)\,dt
     -\sum_{i=1}^2 \int_0^r\!\int_0^t\!
     \beta_i(t-s) a_i(u(s), v(t)) \,ds\,dt
        =0.
\end{equation}
For the second term, recalling $-\beta_i(t)=D_t\xi_i(t),\ i=1,2$ from \eqref{xiproperties}, we obtain
\begin{equation*}
  \begin{split}
    -\int_0^r\!\int_0^t\!\beta_i(t-s) a_i(u(s), v(t)) \,ds\,dt
    &=\int_0^r\!\int_s^r\!D_t\xi_i(t-s) a_i(u(s), v(t)) \,dt\,ds\\
    &=\int_0^r\!\xi_i(r-s)a_i(u(s),v(r))\,ds\\
    &\quad -\int_0^r\!\xi_i(0)a_i(u(s),v(s))\,ds\\
    &\quad-\int_0^r\!\int_s^r\!
     \xi_i(t-s) a_i(u(s),\dot v(t)) \,dt\,ds\\
    &=-\gamma_i\int_0^r\!a_i(u(s),v(s))\,ds\\
    &\quad+\int_0^r\!\int_0^t\!\xi_i(t-s) a_i(u(s), u(t)) \,ds\,dt,
  \end{split}
\end{equation*}
where we changed the order of integrals and we used integration 
by parts, $\xi_i(0)=\gamma_i$ from \eqref{xiproperties}, 
and $v(r)=0$ from \eqref{vproperties}. 
Therefore integration by parts in the first term of 
\eqref{weaksolutionintegral} yields
\begin{equation*}
  \begin{split}
    \int_0^r\!\big(-\rho(\dot u,\dot v)+a(u,v)\big)\,dt
    &-\sum_{i=1}^{2}\gamma_i\int_0^r\! a_i(u,v)\,dt \\
    &+\sum_{i=1}^{2}\int_0^r\!\int_0^t\!
      \xi_i(t-s) a_i(u(s), u(t)) \,ds\,dt = 0.
  \end{split}
\end{equation*}
This, using \eqref{vproperties}, implies
\begin{equation*}
  \begin{split}
    \rho\|u(r)\|^2-\rho\|u(0)\|^2 
    &-\|v(r)\|_V^2+\|v(0)\|_V^2
     +\sum_{i=1}^{2}
     \gamma_i \Big(a_i(v(r),v(r))
       -a_i(v(0),v(0))\Big)\\
     &+2\sum_{i=1}^{2}\int_0^r\!\int_0^t\!\xi_i(t-s) a_i(u(s),u(t))\,ds\,dt=0.
  \end{split}
\end{equation*}
Consequently, recalling \eqref{positivetype}, $v(r)=0$, 
$u(0)=0$, $0<\bar\gamma=\max\{\gamma_1,\gamma_2\}< 1$, 
and the fact that $a_i(w,w) \geq 0$, 
we have 
\begin{equation*}
   \rho\|u(r)\|^2+(1-\bar\gamma)\|v(0)\|_V^2\le 0,
\end{equation*}
that implies $u=0$ a.e., and this completes the proof.
\end{proof}

\begin{rem}
Theorem 2 and Theorem 3 also hold for the simplified problem 
\eqref{strongform2}, see Remark 1 and Remark 2. 
That is, with the assumptions in Theorem 2, 
there exists a unique weak solution for the simplified 
problem.
\end{rem}

\section{Regularity}
Here we study the regularity of the unique weak solution of
\eqref{strongform}, that is, a solution of 
\eqref{weaksolution1}--\eqref{weaksolution3}. 
We explain the limitations for higher regularity in 
Remark 4. We also prove higher regularity of any order of 
the solution of models with smooth kernels in 
Theorem 4.

\begin{corollary}
If 
$u^0\in V, v^0\in H, g\in W_1^1((0,T);H_{\Gamma_\text{N}})$, 
and $f\in L_2((0,T);H)$, then for the unique solution $u$ of 
\eqref{weaksolution1}--\eqref{weaksolution3} we have
\begin{equation}   \label{regularity1}
  u\in L_\infty((0,T);V), \quad \dot u\in L_\infty((0,T);H),
    \quad \ddot u\in L_2((0,T);V^*).
\end{equation}
Moreover we have the estimate
\begin{equation}   \label{estimate1}
  \begin{split}
    \|u\|_{L_\infty((0,T);V)} 
      &+ \|\dot u\|_{L_\infty((0,T);H)}
        + \|\ddot u\|_{L_2((0,T);V^*)}\\
      &\le C \big\{ \|u^0\|_V+ \|v^0\| 
        + \|g\|_{W_1^1((0,T);H_{\Gamma_\text{N}})}
        + \|f\|_{L_2((0,T);H)}\big\} .
  \end{split}
\end{equation}
\end{corollary}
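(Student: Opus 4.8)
The plan is to transfer the uniform a priori bounds established for the Galerkin approximations in Theorem 1 to the weak solution $u$ itself. The key observation is that Theorem 1 gives a bound on $\|u_m\|_{L_\infty((0,T);V)} + \|\dot u_m\|_{L_\infty((0,T);H)} + \|\ddot u_m\|_{L_2((0,T);V^*)}$ that is \emph{independent of $m$}, with exactly the right-hand side appearing in \eqref{estimate1}. Existence of the weak solution $u$ (Theorem 2) was obtained precisely by extracting weakly convergent subsequences $u_l \rightharpoonup u$, $\dot u_l \rightharpoonup \dot u$, $\ddot u_l \rightharpoonup \ddot u$ in the respective spaces, as recorded in \eqref{weakconvergence}. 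So the regularity claim \eqref{regularity1} is essentially immediate from the boundedness \eqref{boundedness}, and what remains is to pass the norm bound to the limit.

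First I would note that uniqueness (Theorem 3) guarantees there is a single weak solution $u$, so the subsequential limit from Theorem 2 is in fact \emph{the} solution, and the full sequence of Galerkin approximations converges weakly to it (every subsequence has a further subsequence converging to the same $u$). Then the main step is weak lower semicontinuity of norms: for each of the three weak convergences in \eqref{weakconvergence}, the norm of the weak limit is bounded by the liminf of the norms of the approximants, i.e.
\begin{equation*}
  \|u\|_{L_\infty((0,T);V)} \le \liminf_{l\to\infty}\|u_l\|_{L_\infty((0,T);V)},
\end{equation*}
and similarly for $\dot u$ in $L_\infty((0,T);H)$ and $\ddot u$ in $L_2((0,T);V^*)$. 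Summing these three inequalities and applying the uniform bound \eqref{aprioriestimate} from Theorem 1, which holds for every $u_l$ with the same constant $C=C(\Omega,\gamma_1,\gamma_2,\rho,T)$, yields exactly \eqref{estimate1}.

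The one technical subtlety I would flag is that $L_\infty((0,T);V)$ and $L_\infty((0,T);H)$ are not reflexive, so the weak convergences in \eqref{weakconvergence} are stated in the Hilbert spaces $L_2((0,T);V)$ and $L_2((0,T);H)$. To conclude the membership $u\in L_\infty((0,T);V)$ and the $L_\infty$ bound, the clean route is to use weak-$*$ convergence in $L_\infty((0,T);V)$ and $L_\infty((0,T);H)$ (these are dual spaces, so bounded sequences are weak-$*$ precompact by Banach--Alaoglu), and then invoke weak-$*$ lower semicontinuity of the norm. Since the weak-$*$ limit must agree with the weak $L_2$ limit $u$, this places $u$ in the correct space with the correct bound. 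The $\ddot u$ term causes no difficulty since $L_2((0,T);V^*)$ is reflexive and ordinary weak lower semicontinuity applies directly. I expect this semicontinuity argument, together with the correct choice of topology for the $L_\infty$ factors, to be the only point requiring care; everything else is a direct appeal to Theorems 1, 2, and 3.
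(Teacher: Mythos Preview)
Your proposal is correct and follows the same approach as the paper: invoke weak lower semicontinuity of the norm together with the uniform a priori bound \eqref{aprioriestimate} and the weak convergences \eqref{weakconvergence} to conclude both \eqref{regularity1} and \eqref{estimate1}. In fact you are more careful than the paper, which simply cites the inequality $\|u\|\le \liminf_{m\to\infty}\|u_m\|$ and stops; your remark about needing weak-$*$ compactness in the nonreflexive $L_\infty$ spaces (via Banach--Alaoglu, identifying the weak-$*$ limit with the $L_2$ weak limit) is a point the paper leaves implicit.
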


\begin{proof}
It is known that if $u_m\rightharpoonup u$, then
\begin{equation*}
  \|u\|\le \lim_{m\to \infty} \inf \|u_m\|. 
\end{equation*}
Then, by \eqref{weakconvergence} and the a priori estimates 
\eqref{aprioriestimate}, we conclude \eqref{regularity1} and 
\eqref{estimate1}.
\end{proof}
We note that, using Remark 3,  Corollary 1 applies also to the simplified problem \eqref{strongform2}.

It is known from the theory of the elliptic operators, 
that global higher spatial regularity can not be obtained with 
mixed boundary conditions. 
Therefore we specialize to the homogeneous Dirichlet boundary 
condition, that is $\Gamma_\tN=\varnothing$, and assume that 
the polygonal domain $\Omega$ is convex. 
We recall the usual Sobolev spaces $H^r=H^r(\Omega)$ and we 
note that here $V=H_0^1(\Omega)$. 
We then use the extension of the operator 
$A$ to an abstract operator  with $\cD(A)=H^2(\Omega)^d\cap V$ 
such that $a(u,v)=(Au,v)$ for sufficiently smooth $u,v$. 
We note that, the elliptic regularity holds, that is, 
\begin{equation}   \label{ellipticregularity}
  \|u\|_{H^2}\le C\|Au\|,\quad u\in H^2(\Omega)^d\cap V.
\end{equation}

\begin{theorem}
We assume that $\Gamma_\tN=\varnothing$, and
\begin{equation} \label{betacondition}
  \sum_{i=1}^2\int_0^t \!\beta_i(s)\ ds <1 \ \textrm{or}\ \int_0^t \! \max_{i=1,2} \beta_i(s)\ ds <\frac{1}{2}. 
\end{equation}
If 
$u^0\in H^2,\,v^0\in V$, and $\dot f\in L_2((0,T);H)$, then for the unique solution $u$ of 
\eqref{weaksolution1}-\eqref{weaksolution3} we have
\begin{equation}   \label{regularity2}
  \begin{split}
    &u\in L_\infty((0,T);H^2), 
    \quad \dot u\in L_\infty((0,T);V),\\
    &\ddot u\in L_\infty((0,T);H),
    \quad \dddot u\in L_2((0,T);V^*).
  \end{split}
\end{equation}
Moreover we have the estimate
\begin{equation}   \label{estimate2}
  \begin{split}
    \|u\|_{L_\infty((0,T);H^2)} 
      &+ \|\dot u\|_{L_\infty((0,T);V)}
       + \|\ddot u\|_{L_\infty((0,T);H)}
       + \|\dddot u\|_{L_2((0,T);V^*)}\\
      &\le C \big\{ \|u^0\|_{H^2}+ \|v^0\|_V
       + \|f\|_{H^1((0,T);H)}\big\} .
  \end{split}
\end{equation}
\end{theorem}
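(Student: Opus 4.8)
The plan is to upgrade the regularity by differentiating the Galerkin equation \eqref{weakGalerkin} once in time, re-running the energy argument of Theorem 1 for $w_m:=\dot u_m$, and then reading the spatial $H^2$-bound off the equation itself. Differentiating the convolution gives $\tfrac{d}{dt}\int_0^t\beta_i(t-s)a_i(u_m(s),\vp_k)\,ds=\beta_i(t)a_i(u_m(0),\vp_k)+\int_0^t\beta_i(t-s)a_i(\dot u_m(s),\vp_k)\,ds$, so $w_m$ satisfies \emph{the same} system as $u_m$ with $f$ replaced by $\dot f$ and with the extra load $\sum_i\beta_i(t)a_i(u_m(0),\cdot)$; since $\Gamma_\tN=\varnothing$ the surface term is absent. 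The initial data are $w_m(0)=\dot u_m(0)=\sum_{j\le m}(v^0,\vp_j)\vp_j$ and $\dot w_m(0)=\ddot u_m(0)$, the latter obtained from \eqref{weakGalerkin} at $t=0$ via $\rho\,\ddot u_m(0)=P_m f(0)-A u_m(0)$. This is where the hypotheses enter: $\dot f\in L_2((0,T);H)$ forces $f\in\cC([0,T];H)$ so $f(0)\in H$, while $u^0\in H^2$ makes $A u_m(0)=P_m(Au^0)$ meaningful with $\|\ddot u_m(0)\|\le C(\|f(0)\|+\|u^0\|_{H^2})$.

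Next I would repeat verbatim the partial-integration-in-time (using $\beta_i=-D_t\xi_i$, $\xi_i(0)=\gamma_i$ from \eqref{xiproperties}) and the multiplication by $\dot w_m$ that produced \eqref{aprioriestimate}. Coercivity is untouched: $a(w_m,w_m)-\sum_i\gamma_i a_i(w_m,w_m)=\sum_i(1-\gamma_i)a_i(w_m,w_m)\ge(1-\bar\gamma)\|w_m\|_V^2$, and the double convolution is again nonnegative by the positive-type property \eqref{positivetype}. The genuinely new term is the singular load, which after pairing with $\dot w_m$ reads $\sum_i\int_0^t\beta_i(r)a_i(u_m(0),\dot w_m(r))\,dr$. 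Here is the main obstacle: because $\beta_i(0)=\infty$, integration by parts in time is blocked, and $\beta_i$ is merely $L_1$, so the load is not square-integrable in time and cannot be treated as an ordinary source. The remedy is to exploit $u^0\in H^2$ and write $a_i(u_m(0),\dot w_m)=(A_i u_m(0),\dot w_m)$, bounding the term by $\big(\sum_i\gamma_i\|A_i u_m(0)\|\big)\max_r\|\dot w_m(r)\|$. The uniform bound $\|A_i u_m(0)\|\le C\|u^0\|_{H^2}$ follows from $\|A_i\,\cdot\|\lesssim\|\cdot\|_{H^2}$, the elliptic estimate \eqref{ellipticregularity}, and the contraction property $\|A u_m(0)\|=\|P_m(Au^0)\|\le\|Au^0\|$ ($P_m$ commutes with $A$). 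A Young inequality then absorbs $\max_r\|\dot w_m\|$ into the left side, yielding $\dot u_m\in L_\infty((0,T);V)$ and $\ddot u_m\in L_\infty((0,T);H)$ uniformly in $m$.

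To recover the spatial regularity I would rewrite the projected strong form $A u_m(t)=P_m f(t)-\rho\,\ddot u_m(t)+\sum_i\int_0^t\beta_i(t-s)P_m A_i u_m(s)\,ds$, take $H$-norms, and use \eqref{ellipticregularity} together with $\|A_i u_m\|\le C\|u_m\|_{H^2}\le C'\|A u_m\|$. The right-hand side now contains $\|f\|_{L_\infty(H)}$ and $\|\ddot u_m\|_{L_\infty(H)}$, both already controlled, plus the convolution $\sum_i\big(\int_0^t\beta_i\big)\sup_s\|u_m(s)\|_{H^2}$. This is precisely where the smallness \eqref{betacondition} is needed: it makes the convolution coefficient strictly less than one after the elliptic bounds, so the term is absorbed on the left and $\|u_m\|_{L_\infty(H^2)}$ is bounded by the data; the two alternative forms of \eqref{betacondition} correspond to the two natural ways of distributing the kernel bound across $A_1,A_2$ (either $\sum_i\int\beta_i$ against a common $H^2$-factor, or $\int\max_i\beta_i$ against $\sum_i\|A_i\cdot\|\le 2\max_i\|A_i\cdot\|$, whence the factor $\tfrac12$).

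Finally, $\dddot u_m=\ddot w_m\in L_2((0,T);V^*)$ follows by the duality argument of Step 2 of Theorem 1 applied to the $w_m$-equation: test against $v\in V$ with $\|v\|_V\le1$, decompose $v=v^1+v^2$, and bound $\rho\langle\ddot w_m,v\rangle$ by $\|\dot f\|+\|w_m\|_V+\big(\sum_i\gamma_i\big)\max_s\|w_m(s)\|_V$ plus the singular contribution $\sum_i\beta_i(t)a_i(u_m(0),v^1)$; the convolution piece is handled by Young's convolution inequality with $\beta_i\in L_1$, and the delicate point—again the singular load, now measured in $V^*$—is exactly the term governing the limitation discussed in Remark 4. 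Passing $m\to\infty$ then proceeds as in Theorem 2 via weak compactness \eqref{weakconvergence} of the new sequences, with weak lower semicontinuity of the norms (as in Corollary 1) transferring all bounds to $u$ and giving \eqref{regularity2}–\eqref{estimate2}.
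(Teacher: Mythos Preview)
Your proposal follows essentially the same route as the paper: differentiate the Galerkin system once in time, run the energy argument with the $\xi_i$-reformulation and the positive-type property \eqref{positivetype} to bound $\dot u_m$ in $L_\infty((0,T);V)$ and $\ddot u_m$ in $L_\infty((0,T);H)$, handle the singular load $\sum_i\beta_i(t)a_i(u_m(0),\cdot)$ via $u^0\in H^2$ exactly as you describe, then extract the $H^2$-bound from the equation itself using \eqref{betacondition} for absorption, and finish with the duality argument for $\dddot u_m$. The only cosmetic difference is that in Step~2 the paper tests \eqref{weakGalerkin} against $Au_m$ (multiplying by $\lambda_k d_k(t)$ and summing) rather than rearranging for $Au_m$ and taking the $H$-norm as you do; the two are equivalent here since $a(u_m,Au_m)=\|Au_m\|^2$.
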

\begin{proof}
We need to show that estimate \eqref{estimate2} holds for 
the sequence $\{u_m\}_{m=1}^\infty$, and its time derivatives. 
Then, similar to the proof 
of Corollary 1, in the limit we conclude 
\eqref{regularity2} and \eqref{estimate2}.  
We organize our proof in 3 steps. 

1. First we find a bound for the sequences 
$\{\dot u_m\}_{m=1}^\infty$, $\{\ddot u_m\}_{m=1}^\infty$ in 
$L_\infty((0,T);V)$ and $L_\infty((0,T);H)$, respectively. 
Differentiating \eqref{weakGalerkin} with respect to time, 
with  notation $\underline{v}=\dot v$, we have
\begin{equation}   \label{weakGalerkin2}
  \begin{split}
    \rho (\ddot \uu_m(t)&,\vp_k) + a(\uu_m(t),\vp_k)
    - \sum_{i=1}^2\int_0^t\! \beta_i(t-s) a_i(\uu_m(s), \vp_k)\,ds\\
    &= (\uf(t),\vp_k) +\sum_{i=1}^2\beta_i(t)a_i(u_m(0),\vp_k),
    \quad k=1,\,\dots,\,m ,\,t\in (0,T),
  \end{split}
\end{equation}
with the initial conditions
\begin{equation}   \label{weakGalerkininitial2}
  \begin{split}
    &\uu_m(0)=\dot u_m(0)=\sum_{j=1}^m(v^0,\vp_j)\vp _j,\\
    &\dot \uu_m(0)=\ddot u_m(0)
    =\sum_{j=1}^m\big(f(0)-Au_m(0),\vp_j\big)\vp_j.
 \end{split}
\end{equation}
Then, using $\beta_i(t-s)=D_s\xi_i(t-s)$ from \eqref{xiproperties} 
and partial integration in time, we have
\begin{equation*}
  \begin{split}
    \rho (\ddot \uu_m(t),\vp_k) 
    &+ a(\uu_m(t),\vp_k) - \sum_{i=1}^2\gamma_ia_i(\uu_m(t),\vp_k)\\
    &+ \sum_{i=1}^2\int_0^t\! \xi_i(t-s) a_i(\dot \uu_m(s), \vp_k)\,ds\\
    &= (\uf(t),\vp_k) 
     +\sum_{i=1}^2\beta_i(t)a_i(u_m(0),\vp_k)\\
    &\quad-\sum_{i=1}^2\xi_i(t)a(\uu_m(0),\vp_k),
    \quad k=1,\,\dots,\,m ,\,t\in (0,T).
  \end{split}
\end{equation*}
Now, multiplying by $\ddot d_k(t)$, 
summing $k=1,\,\dots,\,m$, and integration with respect to $t$, we have
\begin{equation*} 
  \begin{split}
    \rho\|\dot \uu_m(t)\|^2 
    &+(1-\bar\gamma)\|\uu_m(t)\|_V^2\\
    &+2\sum_{i=1}^2\int_0^t\!\int_0^r\!\xi_i(r-s)
      a_i(\dot \uu_m(s),\dot \uu_m(r))\,ds\,dr\\
    &\le \rho\|\dot \uu_m(0)\|^2 
     +(1-\underline\gamma)\|\uu_m(0)\|_V^2\\
    &\quad+2\int_0^t\!(\uf(r),\dot \uu_m(r))\,dr
     + 2\sum_{i=1}^2\int_0^t\!
      \beta_i(r)a_i(u_m(0),\dot \uu_m(r))\,dr\\
    &\quad-2\sum_{i=1}^2\int_0^t
      \xi_i(r)a_i(\uu_m(0),\dot\uu_m(r))\ dr,
  \end{split}
\end{equation*}
where we recall that 
$\xi_i(0)=\gamma_i$, $\bar\gamma=\max\{\gamma_1,\gamma_2\}$, 
and $\underline\gamma=\min\{\gamma_1,\gamma_2\}$. 
Then, recalling the fact that $\xi_i$ are positive definite 
\eqref{positivetype} 
and integration by parts in the last term, we obtain
\begin{equation*} 
  \begin{split}
    \rho\|\dot \uu_m(t)\|^2 
    &+(1-\bar\gamma)\|\uu_m(t)\|_V^2\\
    &\le \rho\|\dot \uu_m(0)\|^2 
     +(1-\underline\gamma)\|\uu_m(0)\|_V^2\\
    &\quad+2\int_0^t\!(\uf(r),\dot \uu_m(r))\,dr
     + 2\sum_{i=1}^2\int_0^t\!
      \beta_i(r)a_i(u_m(0),\dot \uu_m(r))\,dr\\
    &\quad-2\sum_{i=1}^2\int_0^t
      \beta_i(r)a_i(\uu_m(0),\uu_m(r))\ dr\\
    &\quad-2\sum_{i=1}^2\xi_i(t)a_i(\uu_m(0),\uu_m(t))
     +2\sum_{i=1}^2\xi_i(0)a_i(\uu_m(0),\uu_m(0)),
  \end{split}
\end{equation*}
that, using the Cauchy-Schwarz inequality, 
$\|\beta_i\|_{L_1(\IR^+)}=\gamma_i$, $\xi_i(t)\le \xi_i(0)=\gamma_i$, and \eqref{ai}, 
implies
\begin{equation*} 
  \begin{split}
    \rho\|&\dot \uu_m(t)\|^2 
    +(1-\bar\gamma)\|\uu_m(t)\|_V^2\\
    &\le \rho\|\dot \uu_m(0)\|^2 + (1-\underline\gamma)\|\uu_m(0)\|_V^2\\
    &\quad +2/C_1\max_{0\le r\le t}\|\dot \uu_m(r)\|^2
     +C_1\Big(\int_0^t\!\|\uf(r)\|\,dr\Big)^2\\
    &\quad +2/C_2\Big(\sum_{i=1}^2\gamma_i\Big)\|u_m(0)\|_{H^2}^2
     +2\Big(\sum_{i=1}^2\gamma_i\Big) C_2\max_{0\le r\le t}\|\dot\uu_{m}(r)\|^2 \\
    &\quad +2/C_3\Big(\sum_{i=1}^2\gamma_i\Big)\| \uu_{m}(0)\|_V^2
     +2\Big(\sum_{i=1}^2\gamma_i\Big) C_3 \max_{0\le r\le t}\|\uu_{m}(r)\|_V^2\\
    &\quad+2/C_4\Big(\sum_{i=1}^2\gamma_i\Big)\|\uu_{m}(0)\|_V^2
    +2\Big(\sum_{i=1}^2\gamma_i\Big) C_4 \|\uu_m(t)\|_V^2
    +2\Big(\sum_{i=1}^2\gamma_i\Big)\|\uu_m(0)\|_V^2.
  \end{split}
\end{equation*}
This implies, for some constant $C=C(\gamma_1,\gamma_2,\rho,T)$,
\begin{equation*} 
  \begin{split}
    \|\dot \uu_m\|_{L_\infty((0,T);H)}^2
    &+\|\uu_m\|_{L_\infty((0,T);V)}^2\\
    &\le C\big\{
    \|\dot \uu_m(0)\|^2 + \|\uu_m(0)\|_V^2
    +\|u_m(0)\|_{H^2}^2
    +\|\uf\|_{L_1((0,T);H)}^2\big\}.
  \end{split}
\end{equation*}
Then recalling $\uu=\dot u$, the initial data from 
\eqref{weakGalerkininitial2}, and using 
\begin{equation*}
  \|u_m(0)\|_{H^2}\le \|u^0\|_{H^2},\quad 
  \|\dot u_m(0)\|_V\le \|v^0\|_V,
\end{equation*}
we have
\begin{equation}  \label{regularity:eq1}
  \begin{split}
    \|\ddot u_m\|_{L_\infty((0,T);H)}^2
    &+\|\dot u_m\|_{L_\infty((0,T);V)}^2\\
    &\le C\big\{
    \|u^0\|_{H^2}^2 + \|v^0\|_V^2
    +\|f(0)\|^2 + \|\uf\|_{L_1((0,T);H)}^2\big\}.
  \end{split}
\end{equation}

2. We now find a bound for $\{u_m\}_{m=1}^\infty$ in $L_\infty((0,T);H^2)$. 
We recall the eigenvalue problem \eqref{weakeigenvalue} 
with eigenpairs $\{(\lambda_j,\vp_j)\}_{j=1}^\infty$. 
Then we multiply 
\eqref{weakGalerkin} by $\lambda_k d_k(t)$ and add 
for $k=1,\,\dots,\,m$ to obtain
\begin{equation} \label{ineq0}
  a(u_m,Au_m)=(f-\rho\ddot u_m,Au_m)
  +\sum_{i=1}^2\int_0^t\!\beta_i(t-s)a_i(u_m(s),Au_m(t))\,ds.
\end{equation}
This, using the Cauchy-Schwarz inequality and \eqref{ai}, implies
\begin{equation} \label{ineq}
  \begin{split}
   \|Au_m(t)\|^2
   &\le \frac{2}{\epsilon}
   \Big(\|f(t)\|^2+\rho^2\|\ddot u_m(t)\|^2\Big)
   +\epsilon\|Au_m(t)\|^2\\
   &\quad +\Big(\sum_{i=1}^2\int_0^t\!\beta_i(s)\ ds\Big)\max_{0\le s \le t}\|Au_m(s)\|^2,
 \end{split}
\end{equation}
that, by elliptic regularity \eqref{ellipticregularity} and assumption \eqref{betacondition}, 
gives us
\begin{equation*}
  \|u_m\|_{L_\infty((0,T);H^2)}^2
  \le C\Big(\|f\|_{L_\infty((0,T);H)}^2
  +\|\ddot u_m\|_{L_\infty((0,T);H)}^2\Big).
\end{equation*}
From this and \eqref{regularity:eq1} we conclude
\begin{equation*}
  \begin{split}
    \|\ddot u_m\|_{L_\infty((0,T);H)}^2
    &+\|\dot u_m\|_{L_\infty((0,T);V)}^2
     +\|u_m\|_{L_\infty((0,T);H^2)}^2\\
    &\le C\big\{
    \|u^0\|_{H^2}^2 + \|v^0\|_V^2
    +\|f\|_{L_\infty((0,T);H)}^2 
    + \|\uf\|_{L_1((0,T);H)}^2\big\},
  \end{split}
\end{equation*}
that using 
$\|f\|_{L_\infty((0,T);H)}\le C \|f\|_{W_1^1((0,T);H)}$, 
by Sobolev inequality, we have
\begin{equation*}
  \begin{split}
    \|\ddot u_m\|_{L_\infty((0,T);H)}^2
    &+\|\dot u_m\|_{L_\infty((0,T);V)}^2
     +\|u_m\|_{L_\infty((0,T);H^2)}^2\\
    &\le C\big\{
    \|u^0\|_{H^2}^2 + \|v^0\|_V^2
    +\|f\|_{W_1^1((0,T);H)}^2 \big\}\\
    &\le C\big\{
    \|u^0\|_{H^2}^2 + \|v^0\|_V^2
    +\|f\|_{H^1((0,T);H)}^2 \big\}.
  \end{split}
\end{equation*}

3. Finally from \eqref{weakGalerkin2}, similar to step 2 of 
the proof of Theorem 1, we obtain
\begin{equation*}  
  \begin{split}
    \|\dddot u_m\|_{L_2((0,T);V^*)}^2
    &\le C\big\{
    \|u^0\|_{H^2}^2 + \|v^0\|_V^2
    +\|f\|_{H^1((0,T);H)}^2 \big\}.
  \end{split}
\end{equation*}
The last two estimates then, in the limit, imply \eqref{regularity2} and the 
desired estimate \eqref{estimate2}. The proof is 
now complete.
\end{proof}

\begin{rem}
If we continue differentiating \eqref{weakGalerkin2} in 
time to investigate more regularity, we obtain
\begin{equation*}   
  \begin{split}
    \rho (\dddot \uu_m(t),\vp_k&) + a(\dot \uu_m(t),\vp_k)
    - \sum_{i=1}^2\int_0^t\! \beta_i(t-s) a_i(\dot \uu_m(s), \vp_k)\,ds\\
    &= (\ddot f(t),\vp_k) +\sum_{i=1}^2\dot \beta_i(t)a_i(u_m(0),\vp_k)\\
    &\quad+\sum_{i=1}^2\beta_i(t)a_i(\uu_m(0),\vp_k),
    \quad k=1,\,\dots,\,m ,\,t\in (0,T),
  \end{split}
\end{equation*}
Further, from 
$\dot \beta_i(t)a_i(u_m(0),\vp_k),\ i=1,2$, 
we get $\dot \beta_i(t)a_i(u_m(0),\ddot \uu_m(t))$, 
but the $\dot\beta_i$ are not integrable. 
Besides, after integration in time, we can not use partial integration to transfer one time derivative from $\dot\beta_i$ 
to $\ddot \uu_m(t)$, since $\beta_i$ is singular at $t=0$. 
This means that we can not get 
more regularity with weakly singular kernels $\beta_i$. 
This also indicates that with smoother kernel we can get 
higher regularity in case of homogeneous Dirichlet 
boundary condition under the appropriate assumption on the 
data, that is, more regularity and compatibility conditions.
\end{rem}

\begin{rem}
For the simplified problem \eqref{strongform2}, the inequality \eqref{ineq} is
\begin{equation*} 
  \begin{split}
   \|Au_m(t)\|^2
   &\le \frac{2}{\epsilon}
   \Big(\|f(t)\|^2+\rho^2\|\ddot u_m(t)\|^2\Big)
   +\epsilon\|Au_m(t)\|^2\\
   &\quad +\Big(\int_0^t\!\beta(s)\ ds\Big)\max_{0\le s \le t}\|Au_m(s)\|^2.
 \end{split}
\end{equation*}
Hence, the assumption \eqref{betacondition} can be ignored, since 
$\int_0^t\!\beta(s)\ ds<\gamma<1$. That is, Theorem 3 applies also to the simplified problem 
\eqref{strongform2}, ignoring the assumption \eqref{betacondition}.
\end{rem}
\begin{rem}
We recall the definition of the operators $A,A_1$ and $A_2$ from \eqref{Ai}.  
If the solution $u$ is regular such that its second order partial derivatives are comutative, 
then the operator $A^{1/2}$ is comutative with the operators $A_1^{1/2},A_2^{1/2}$. 
Here, the operator $A^l\  (l\in \mathbb{R})$ is defined by, see e.g., \cite{Thomee_Book}
\begin{equation*}
  A^l v=\sum_{k=1}^\infty \lambda_k^l (v,\vp_k)\vp_k,
\end{equation*}
where $\{(\lambda_k,\vp_k)\}_{k=1}^\infty$ are the eigenpairs of the operator $A$,  
and in a similar way $A_1^l$ and $A_2^l$ are defined.
In this case the assumption \eqref{betacondition} is replaced by
\begin{equation*}
  \int_0^t \! \max_{i=1,2}\beta_i(s)\ ds <1,
\end{equation*}
since in \eqref{ineq0} we have
\begin{equation*}
  \begin{split}
    \sum_{i=1}^2\int_0^t\! \beta_i(t-s)a_i(u_m(s),Au_m(t))\ ds
    &\le \int_0^t\! \max_{i=1,2}\beta_i(t-s)a(u_m(s),Au_m(t))\ ds\\
    &\le \Big(\int_0^t\! \max_{i=1,2}\beta_i(t-s)\ ds\Big)
      \max_{0\le s\le t}\|Au_m(s)\|^2,
  \end{split}
\end{equation*}
where we used the fact that
\begin{equation*}
  \begin{split}
    a_i(v,Av)
    &=(A^{1/2}A_i^{1/2}A_i^{1/2}v,A^{1/2}v)\\
    &=(A_i^{1/2}A^{1/2}A_i^{1/2}v,A^{1/2}v)=a_i(A^{1/2}v,A^{1/2}v)\ge 0.
  \end{split}
\end{equation*} 
\end{rem}

In the next theorem we state regularity of any order of the solution of 
models with smooth kernels.  The proof is by induction and simillar to 
 the proof of Theorem 4, and we omit the details. 
 
\begin{theorem}
We assume that $\Gamma_\tN=\varnothing$, and condition \eqref{betacondition} holds.
Assume $(r=0,1,\dots)$
\begin{equation*}
 \begin{aligned}
  &u^0\in H^{r+1},\quad v^0\in H^r,&\\
  &\frac{d^k f}{dt^k}\in L_2((0,T);H^{r-k}),&\textrm{for}\ \  k=0,\dots,r, \\
  & \ \beta_i \in W^{r-1}_1(0,T),& \textrm{if} \ \  r \geq 2,
 \end{aligned}
\end{equation*}
and the $r^{\textrm{th}}$-order compatibility conditions
\begin{equation*}
 \begin{aligned}
  &u^0_0:=u^0\in V,
   \quad u^0_1:=v^0,&\\
  &u^0_2:=\frac{1}{\rho}(f(0)-Au^0)\in V,&\textrm{if}\ \ r=2\\
  &u^0_r:=\frac{1}{\rho}
   \Big(
   \frac{d^{r-2}}{{dt^{r-2}}}f(0)-Au^0_{r-2}
   +\sum_{j=0}^{r-3}\sum_{i=1}^2\frac{d^j}{dt^j}
    \beta_i(0)A_iu^0_{r-3-j}
   \Big) \in V,&\textrm{if}\ \ r\ge 3.
 \end{aligned}
\end{equation*}
Then for the unique solution $u$ of 
\eqref{weaksolution1}-\eqref{weaksolution3} 
we have
\begin{equation*}
  \frac{d^k}{dt^k} u \in L_\infty ((0,T);H^{r+1-k})
  \quad (k=0,\dots,r+1),
\end{equation*}
and we have the estimate 
\begin{equation*}
  \sum_{k=0}^{r+1} 
  \Big\|\frac{d^k u}{dt^k}\Big\|_{L_\infty ((0,T);H^{r+1-k})}
  \le C \bigg( 
  \sum_{k=0}^r 
  \Big\|\frac{d^k f}{dt^k}\Big\|_{L_2((0,T);H^{r-k})}
  +\|u^0\|_{H^{r+1}}+\|v^0\|_{H^r} \bigg).
\end{equation*}
\end{theorem}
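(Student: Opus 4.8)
The plan is to argue by induction on $r$, deriving the estimate for the Galerkin approximations $u_m$ with a constant independent of $m$ and then letting $m\to\infty$ exactly as in Corollary~1. Throughout I abbreviate $u_m^{[k]}=\tfrac{d^k u_m}{dt^k}$, $f^{[k]}=\tfrac{d^k f}{dt^k}$, and write $\beta_i^{[k]}$ for the time-derivatives of the kernels. The cases $r=0$ and $r=1$ are Corollary~1 and Theorem~4, so I assume the assertion holds up to order $r-1$ and establish it at order $r$. The first step is to differentiate the Galerkin identity \eqref{weakGalerkin} $r$ times in time; with $\Gamma_\tN=\varnothing$ the surface term is absent, and the iterated convolution-differentiation rule
\begin{equation*}
  \frac{d^r}{dt^r}(\beta_i*\psi)(t)
  =(\beta_i*\psi^{[r]})(t)
  +\sum_{l=0}^{r-1}\beta_i^{[r-1-l]}(t)\,\psi^{[l]}(0)
\end{equation*}
shows that $w_m:=u_m^{[r]}$ solves an equation of the same shape as \eqref{weakGalerkin2},
\begin{equation*}
  \rho(\ddot w_m,\vp_k)+a(w_m,\vp_k)
   -\sum_{i=1}^2\int_0^t\!\beta_i(t-s)a_i(w_m(s),\vp_k)\,ds
   =(f^{[r]},\vp_k)
   +\sum_{i=1}^2\sum_{l=0}^{r-1}\beta_i^{[r-1-l]}(t)\,a_i\big(u_m^{[l]}(0),\vp_k\big),
\end{equation*}
with initial data $w_m(0)=u_m^{[r]}(0)$, $\dot w_m(0)=u_m^{[r+1]}(0)$ fixed by the compatibility quantities $u^0_r,u^0_{r+1}$. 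Here the hypothesis $\beta_i\in W_1^{r-1}(0,T)$ makes the derivatives $\beta_i^{[r-1-l]}$ available in $L_1$, while the compatibility conditions, together with the regularity of $u^0,v^0,f$, guarantee inductively that $u^0_l\in H^{r+1-l}$, so that all the initial data below lie in the required spaces.

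Next I would run the energy argument of step~1 of Theorem~4 verbatim on this equation: substitute $\beta_i=D_s\xi_i$ and integrate by parts in time to expose the positive-type kernel $\xi_i$, whose contribution is discarded by \eqref{positivetype}, and then test against $\dot w_m=u_m^{[r+1]}$. The only new terms are the forcing contributions $\beta_i^{[r-1-l]}(t)\,a_i\big(u_m^{[l]}(0),u_m^{[r+1]}\big)$, which are treated just like the term $\beta_i(t)a_i(u_m(0),\cdot)$ in Theorem~4: write $a_i(\cdot,\cdot)=(A_i\cdot,\cdot)$, integrate by parts in space, and apply Cauchy--Schwarz in $H$ together with $\|\beta_i^{[r-1-l]}\|_{L_1}$ and $\|u_m^{[l]}(0)\|_{H^2}$. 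This produces, uniformly in $m$,
\begin{equation*}
  \big\|u_m^{[r+1]}\big\|_{L_\infty((0,T);H)}^2
  +\big\|u_m^{[r]}\big\|_{L_\infty((0,T);V)}^2
  \le C\Big\{\|u^0\|_{H^{r+1}}^2+\|v^0\|_{H^r}^2
   +\sum_{k=0}^r\big\|f^{[k]}\big\|_{L_2((0,T);H^{r-k})}^2\Big\},
\end{equation*}
once the initial-data norms are re-expressed through the compatibility formulas.

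The remaining spatial regularity is gained by iterating step~2 of Theorem~4. For $j=r-1,r-2,\dots,0$ I would multiply the $j$-times differentiated Galerkin equation by $\lambda_k\,\tfrac{d^j d_k}{dt^j}$ and sum over $k$, getting $\|Au_m^{[j]}\|^2$ on the left and, on the right, the already-controlled data $f^{[j]}-\rho\,u_m^{[j+2]}$, the memory term absorbed through assumption \eqref{betacondition}, and initial-data terms bounded as above. Elliptic regularity then lifts $u_m^{[j]}$ by two spatial orders; invoking the estimate $\|v\|_{H^s}\le C\|Av\|_{H^{s-2}}$ (which is \eqref{ellipticregularity} for $s=2$ and for $s>2$ requires correspondingly regular $\partial\Omega$) and descending in $j$ yields $u_m^{[k]}\in L_\infty((0,T);H^{r+1-k})$ for $k=0,\dots,r+1$, with the stated bound. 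Passing $m\to\infty$ by weak lower semicontinuity of the norms, as in Corollary~1, completes the argument.

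The main obstacle is the cascade of forcing terms produced by differentiating the convolution. Their top derivative $\beta_i^{[r-1]}(t)$, paired with $u_m(0)$, is exactly what forces the smoothness hypothesis $\beta_i\in W_1^{r-1}$: for the weakly singular Mittag-Leffler kernels already $\dot\beta_i\notin L_1$, the breakdown analysed in Remark~4, so the result is intrinsically confined to smoother kernels. The delicate point is to check that the recursion defining the $u^0_l$ genuinely reproduces the initial values $u_m^{[l]}(0)$ of the differentiated Galerkin system --- here $A$ commutes with the spectral projection onto $\Span\{\vp_j\}_1^m$, whereas the non-commuting $A_i$-contributions are only bounded in norm rather than identified --- and that these data indeed sit in $H^{r+1-l}$. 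Once this is arranged the energy and elliptic estimates are routine repetitions of the $r=1$ computation, and the only structural caveat is the higher-order elliptic regularity, which on a mere convex polygon is guaranteed only up to $H^2$.
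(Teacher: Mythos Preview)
Your proposal is correct and matches the paper's stated approach: the paper writes only that ``the proof is by induction and similar to the proof of Theorem~4, and we omit the details.'' A suppressed draft in the source organizes the induction slightly differently---it differentiates the equation \emph{once} per step, applies the order-$r$ conclusion directly to $\underline u:=\dot u$ (with forcing $\dot f+\sum_i\beta_i(t)A_iu^0$ and initial data $v^0,\,\tfrac1\rho(f(0)-Au^0)$), and then recovers the top spatial order by testing with $A^{r+1}u$---whereas you differentiate $r$ times at once and descend through $j=r-1,\dots,0$ with iterated elliptic lifts. These are equivalent rearrangements of the same energy/elliptic computation; the paper's version lets the induction hypothesis absorb more of the bookkeeping, yours keeps everything at the Galerkin level. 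Your two caveats---that $A_i$ does not commute with the spectral projection so the Galerkin initial values $u_m^{[l]}(0)$ are only norm-bounded by the $u^0_l$ rather than identified with their projections, and that elliptic regularity beyond $H^2$ is not available on a merely convex polygon---are well taken and are points the paper passes over in silence.
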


We note that, with $\beta_i \in W^{r-1}_1(0,T)$ we have 
$\beta_i \in \cC^{r-2}[0,T]$ by Sobolev inequality. 
Therefore $u^0_r$ in the compatibility conditions is 
well-defined.

We also note that Remark 5 holds for Theorem 5, too.  
Remark 6 can be applied to Theorem 5, provided the solution $u$ is smooth 
enough such that the operator $A^{\frac{r+1}{2}}$ is comutative with the operators 
$A_i^{1/2},\ i=1,2$, that is, when $(r+2)$-th order partial derivatives of the solution $u$ 
are comutative.

\textbf{Acknowledgment.} 
I would like thank Dr. Milena Racheva and Prof. Mikael Enelund for fruitful discussion 
on fractional order viscoelasticity. 
I also thank Prof. Stig Larsson and 
the anonymous referees for constructive comments.  

\bibliographystyle{amsplain}
\bibliography{Thesis}

\end{document}